\let\doendproof\endproof
\renewcommand\endproof{~\hfill\qed\doendproof}
\newcommand{\qedhere}{}
\renewcommand{\*}{\times}
\renewcommand{\geq}{\geqslant}
\renewcommand{\leq}{\leqslant}
\DeclareMathOperator{\mad}{\mathrm{mad}}
\DeclareMathOperator{\ad}{\mathrm{ad}}
\renewcommand{\choose}[2]{\left(\begin{smallmatrix} #1\\#2\end{smallmatrix}\right)}
\newcounter{config}
\newcounter{regle}
\spnewtheorem{obs}[theorem]{Observation}{\bfseries}{\rmfamily}
\title{Coloring squares of graphs with mad constraints}%$\mad(G) <4 \Rightarrow \frac{5\Delta}{2}\leqslant\chi_\ell(G^2)\leqslant 3\Delta+1$}
\date{\today}
\author{Hervé Hocquard\inst{1} \and Seog-Jin Kim\inst{2} \and Théo Pierron\inst{1}}
\institute{Univ. Bordeaux, Bordeaux INP, CNRS, LaBRI, UMR5800, F-33400 Talence, France, \email{firstname.lastname@u-bordeaux.fr} \and 
Department of Mathematics Education, Konkuk University, Seoul, 05029, South Korea, \email{skim12@konkuk.ac.kr}} 
\begin{document}
\maketitle
%\vspace{-0.5cm}
\noindent
\makebox[\linewidth]{\small \today}
\vspace{0.5cm}
\begin{abstract}
  A proper vertex $k$-coloring of a graph $G=(V,E)$ is an assignment
  $c:V\to \{1,2,\ldots,k\}$ of colors to the vertices of the graph
  such that no two adjacent vertices are associated with the same
  color. The square $G^2$ of a graph $G$ is the graph defined by
  $V(G)=V(G^2)$ and $uv \in E(G^2)$ if and only if the distance
  between $u$ and $v$ is at most two. We denote by $\chi(G^2)$ the
  chromatic number of $G^2$, which is the least integer $k$ such that
  a $k$-coloring of $G^2$ exists. By definition, at least
  $\Delta(G)+1$ colors are needed for this goal, where $\Delta(G)$
  denotes the maximum degree of the graph $G$. In this paper, we prove
  that the square of every graph $G$ with $\mad(G)<4$ and
  $\Delta(G) \geqslant 8$ is $(3\Delta(G)+1)$-choosable and even
  correspondence-colorable. Furthermore, we show a family of
  $2$-degenerate graphs $G$ with $\mad(G)<4$, arbitrarily large
  maximum degree, and $\chi(G^2)\geqslant \frac{5\Delta(G)}{2}$,
  improving the result of Kim and Park~\cite{KP}.% and the second author.

%Square coloring is a stronger variant of graph coloring where every
%two vertices within distance two have to receive different colors. 

%defined on $V(G)$ such that two vertices $u$ and $v$ are adjacent in $G^2$ if the distance between $u$ and $v$ in $G$ is at most $2$. 

\end{abstract}

This work was supported by the National Research Foundation of Korea
(NRF) grant funded by the Korea government (MSIT)
(NRF-2018R1A2B6003412).

%%%%%%%%%%%%%%%%%%%%%%%%%%%%%%%%%%
\section{Introduction}
%coloring, mad, state of the art, generic bounds, correspondence coloring. 
A proper vertex $k$-coloring of a graph $G=(V,E)$ is an assignment
$c:V\to \{1,2,\ldots,k\}$ of colors to the vertices of the graph such
that no two adjacent vertices are associated with the same color. The
square $G^2$ of a graph $G$ is the graph defined by $V(G)=V(G^2)$ and
$uv \in E(G^2)$ if and only if the distance between $u$ and $v$ is at
most two. We denote by $\chi(G^2)$ the chromatic number of $G^2$,
which is the least integer $k$ such that a $k$-coloring of $G^2$
exists. In other words, it is a stronger variant of graph coloring
where every two vertices within distance two have to receive different
colors. By definition, at least $\Delta(G)+1$ colors are needed for
this goal, where $\Delta(G)$ denotes the maximum degree of the graph
$G$. Indeed, if we consider a vertex of maximal degree and its
neighbors, they form a set of $\Delta(G)+1$ vertices, any two of which
are adjacent or have a common neighbor. Hence, at least $\Delta(G)+1$
colors are needed to color properly $G^2$. This subject was initiated
by Kramer and Kramer in~\cite{KK} and was intensively studied
afterwards especially for planar graphs. In 1977, Wegner
proposed~\cite{wegner} the following conjecture.

\begin{conjecture}[\cite{wegner}]
  \label{conj:wegner}
 If G is a planar graph, then:
 \begin{itemize}
 \item[$\bullet$] $\chi(G^2) \le 7$ if $\Delta(G)=3$
 \item[$\bullet$] $\chi(G^2) \le \Delta(G)+5$ if $4 \le \Delta(G) \le 7$
  \item[$\bullet$] $\chi(G^2) \le \lfloor\frac{3\Delta(G)}{2}\rfloor + 1$ if $\Delta(G) \ge 8$.
 \end{itemize}
\end{conjecture}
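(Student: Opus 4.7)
The plan is to attack Wegner's conjecture via the classical minimal counterexample plus discharging framework that has proven effective for many planar coloring problems. The strategy splits naturally along the three degree regimes listed in the statement, with the last being by far the most delicate since it asks for the asymptotically tight coefficient $3/2$.

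For the main case $\Delta(G) \geq 8$, I would assume a minimal counterexample $G$ (minimizing $|V(G)|+|E(G)|$) and aim to derive a contradiction from its planarity. First I would compile a list of reducible configurations: local subgraphs whose presence in $G$ would allow a valid $(\lfloor 3\Delta/2\rfloor + 1)$-coloring of a strictly smaller graph to be extended. Typical candidates are a vertex $v$ of small degree whose second neighborhood contains fewer than $\lfloor 3\Delta/2\rfloor$ vertices, or local configurations where a Hall-type or Combinatorial-Nullstellensatz argument guarantees the extension even when naive greedy coloring fails. Second, I would design a discharging scheme on $V(G) \cup F(G)$ based on Euler's identity $\sum_v (d(v)-4) + \sum_f (\ell(f)-4) = -8$. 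After assigning initial charges $d(v)-4$ and $\ell(f)-4$, local redistribution rules (charge flowing from large-degree vertices to light neighbors or to incident short faces, etc.) should force every final charge to be nonnegative, contradicting the negative total.

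For the intermediate range $4 \leq \Delta \leq 7$, where the slack between $\Delta + 5$ and the trivial lower bound $\Delta + 1$ is constant, the same template applies but requires a finer case analysis around degree-$\Delta$ vertices, exploiting the strong restrictions planarity places on what second neighborhoods can look like. The $\Delta = 3$ case is qualitatively different: following Thomassen and independently Hartke, Jahanbekam, and Thomas, cubic planar graphs demand their own intricate discharging plus reducibility argument, and I would adapt their route rather than reinvent it.

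The hardest step is unambiguously the tightness of the coefficient $3/2$ for large $\Delta$. The best unconditional results for $\chi(G^2)$ on planar graphs achieve only $\lceil 5\Delta/3\rceil + C$ (Molloy--Salavatipour) or $(3/2 + o(1))\Delta$ asymptotically (Havet--van den Heuvel--McDiarmid--Reed). Pushing the multiplicative constant all the way down to $3/2$ with the correct additive term would demand both reducible configurations whose extension arguments go well beyond greedy coloring and a discharging scheme precise enough to locate them in every plane embedding. No such combination is currently known, which is precisely why the conjecture has remained open for over four decades; my proposal should therefore be read as a description of the framework within which a complete proof would have to live rather than a blueprint guaranteed to close, and it also explains why the present paper settles for the weaker target of $3\Delta+1$ colors under the strengthened density hypothesis $\mad(G) < 4$.
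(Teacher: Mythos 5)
The statement you were asked about is Wegner's conjecture, and the paper does not prove it: it is quoted from Wegner's 1977 technical report as an open conjecture, and it remains open to this day (only the $\Delta=3$ case has been settled, by Thomassen and independently by Hartke, Jahanbekam and Thomas, and even that is not claimed in this paper). So there is no ``paper's own proof'' to compare against, and your submission, by your own admission in the final paragraph, is not a proof either --- it is a description of the discharging-plus-reducible-configurations framework within which a proof would presumably live, together with an accurate summary of the best known partial results ($\lceil 5\Delta/3\rceil + C$ by Molloy and Salavatipour, and $(3/2+o(1))\Delta$ asymptotically by Havet, van den Heuvel, McDiarmid and Reed).

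The genuine gap is therefore everything that matters: you never exhibit a single concrete reducible configuration whose extension argument works with only $\lfloor 3\Delta/2\rfloor+1$ colors, nor a discharging scheme on $V(G)\cup F(G)$ whose rules provably make all charges nonnegative, and you correctly observe that no such combination is known. A correct review of your own text would simply say that you have restated the problem and its difficulty. For the purposes of this paper, note that the conjecture plays only a motivational role; the actual results proved here (Theorems~\ref{thm:maincc}, \ref{thm:main} and \ref{thm:lower}) concern graphs with bounded maximum average degree rather than planar graphs, and aim at the much weaker bound $3\Delta+1$, which is why they are provable with the kind of degeneracy-plus-discharging arguments you sketch.
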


%The sparseness of a graph can for example be measured by its maximum average degree, as defined below.
Let
$\mad(G)=\max\left\{\frac{2|E(H)|}{|V(H)|},\;H \subseteq G\right\}$ be
the \emph{maximum average degree} of a graph $G$, where $V(H)$ and
$E(H)$ are the sets of vertices and edges of $H$, respectively.  This
is a conventional measure of sparseness of an arbitrary graph (not
necessary planar). For more details on this invariant see {\it e.g.}
\cite{Coh10,Toft}.

Hosseini, Dolama and Sopena in~\cite{HDS} first made the link between
the maximum average degree and the chromatic number of the square of a
graph.  They proved the following result.

\begin{theorem}[\cite{HDS}]
  \label{thm:hds}
  Let $G$ be a graph with $\mad(G) < \frac{16}{7}$. Then, $\chi(G^2)=\Delta(G)+1$.
\end{theorem}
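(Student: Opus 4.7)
The lower bound $\chi(G^2)\geq\Delta(G)+1$ is already justified in the introduction, so I focus on the upper bound. I would use the classical combination of a minimum counterexample argument and the discharging method. Let $G$ be a counterexample minimizing $|V(G)|+|E(G)|$ and set $k=\Delta(G)+1$. Since $\mad$ is monotone under taking subgraphs, every proper subgraph $H$ of $G$ satisfies $\mad(H)<16/7$ and $\Delta(H)\leq\Delta(G)$, so by minimality $\chi(H^2)\leq k$.

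The first stage is to list local configurations that must be reducible and hence cannot appear in $G$. The easiest reduction is $\delta(G)\geq 2$: a vertex $v$ of degree at most $1$ has at most $\Delta(G)$ neighbors in $G^2$, so any $k$-coloring of $G-v$ extends to $v$. More substantial configurations have to be excluded around degree-$2$ vertices, since a degree-$2$ vertex $v$ with neighbors $x,y$ has $G^2$-degree up to $d(x)+d(y)-1$ and simply deleting $v$ does not suffice. I would target short chains of degree-$2$ vertices and pairs of degree-$2$ vertices sharing a low-degree neighbor: delete the entire chain, invoke minimality on the remainder, and extend the coloring by a greedy or Hall-type argument in a well-chosen order, possibly after recoloring a few boundary vertices.

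The second stage is a discharging argument. Assign the initial charge $\mu(v)=d(v)-\tfrac{16}{7}$ to each $v\in V(G)$, so that
\[
  \sum_{v\in V(G)}\mu(v) \;=\; 2|E(G)|-\tfrac{16}{7}|V(G)| \;<\; 0
\]
by hypothesis. I would design discharging rules sending carefully tuned amounts of charge from vertices of degree at least $3$ to nearby degree-$2$ vertices, and check that, thanks to the forbidden configurations, every vertex ends with non-negative charge. Since total charge is preserved, this contradicts the strict negativity above. The main obstacle will be the first stage: the coefficient $16/7$ is sharp, so the reducible set must be just rich enough for the discharging to close, and establishing reducibility of each configuration requires a detailed case analysis of the possible colorings on its boundary.
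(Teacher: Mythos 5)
The paper does not prove Theorem~\ref{thm:hds} at all: it is quoted from the reference \cite{HDS} purely as background, so there is no in-paper proof to compare yours against. Judged on its own terms, what you have written is a proof \emph{plan} rather than a proof, and the plan defers exactly the parts where the entire difficulty of the theorem lives. You never specify which configurations around degree-$2$ vertices are reducible, you never carry out a single extension argument, and you never state the discharging rules or verify them. With only $\Delta(G)+1$ colors available, a degree-$2$ vertex $v$ with neighbors $x,y$ can have up to $d(x)+d(y)$ neighbors in $G^2$ (not $d(x)+d(y)-1$ as you write: count $x$, $y$, the $d(x)-1$ other neighbors of $x$ and the $d(y)-1$ other neighbors of $y$), which can be as large as $2\Delta(G)$; so deleting $v$ and greedily recoloring cannot work in general, and the reducibility arguments must exploit the precise structure forced by $\mad<16/7$ (long threads of degree-$2$ vertices, neighbors of small degree) together with recoloring. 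You acknowledge this obstacle explicitly (``the main obstacle will be the first stage'') but do not resolve it, and the discharging side is equally unspecified: with initial charge $d(v)-\tfrac{16}{7}$ a degree-$2$ vertex has deficit $\tfrac{2}{7}$ while a degree-$3$ vertex has surplus only $\tfrac{5}{7}$, so the rules must send charge along paths of degree-$2$ vertices and the set of reducible configurations has to be tuned to make every case close; none of that tuning is done.

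In short, the overall strategy (minimal counterexample, reducible configurations around $2^-$-vertices, discharging from the charge $d(v)-\tfrac{16}{7}$) is the standard and almost certainly correct template, and your handling of the minimality step and of $1^-$-vertices is fine. But as submitted the argument has a genuine gap: every step that actually requires the hypothesis $\mad(G)<\tfrac{16}{7}$ and the tightness of $\Delta(G)+1$ colors is left as a promise rather than an argument, so the proposal does not establish the theorem.
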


Recently, following problem was considered in \cite{cc} and has received some attentions.  
\begin{problem}[\cite{cc}] \label{Q-main}
For each integer $k \geq 2$, what is  $\max \{\chi(G^2)\mid  \mad(G) < 2k\}$? 
\end{problem}

For $k = 2$, Charpentier~\cite{cc} conjectured that
$\chi(G^2)\leq 2\Delta(G)$ if $\mad(G) < 4$, but it was disproved
in~\cite{KP} by constructing a graph $G$ such that
$\chi(G^2) = 2\Delta(G)+2$ and $\mad(G)<4$. Charpentier~\cite{cc}
proved that for sufficiently large $\Delta(G)$,
$\chi(G^2) \leq 3 \Delta(G) + 3$ if $\mad(G) < 4$.  Thus the results
in~\cite{cc} and~\cite{KP} implies that
\begin{equation} \label{bounds-original}
2 \Delta(G) +2 \leq \max \{\chi(G^2) \mid \mad(G) < 4\} \leq 3 \Delta(G) + 3.
\end{equation}

%\begin{conjecture} \label{conj-Charpentier} \cite{cc}
%There exists an integer $D$ such that every graph $G$ with $\Delta(G) \geq D$ and $mad(G) < 4$ has $\chi(G^2) \leq 2\Delta(G)$.  And, for $k \geq 3$, there exists an integer $D_k$ such that every graph $G$ with $\Delta(G) \geq D_k$ and $mad(G) < 2k$ has $\chi(G^2) \leq k\Delta(G)-k$. 
%\end{conjecture}

In this paper, we study Problem~\ref{Q-main} and we show that there
exists a family of graphs $G$ with $\mad(G) < 4$ and arbitrarily large
maximum degree such that $\chi(G^2) \geq \frac{5\Delta(G)}{2}$
(Theorem~\ref{thm:lower}).  We also show that
$\chi(G^2) \leq 3\Delta(G) +1$ if $\mad(G) < 4$ and $\Delta(G) \geq 8$
(Theorem~\ref{thm:main}). Note that the upper bounds
$\chi(G^2) \leq 3\Delta(G) +1$ are tight for $\Delta(G) \leq 4$.
These results improve the bounds on (\ref{bounds-original}) to
\begin{equation} \label{bounds-new}
\frac{5\Delta(G)}{2}\leq \max \{\chi(G^2) \mid \mad(G) < 4\} \leq 3 \Delta(G) + 1.
\end{equation}

We also prove upper bounds of $\chi(G^2)$ for arbitrarily integer
$k \geq 3$ and $\mad(G) < 2k$.  Charpentier proved~\cite{cc} that
roughly $(2k-1)\Delta$ colors are sufficient to color the square of
every graph $G$ with $\mad(G)<2k$ and $\Delta(G)=\Delta$. For
completeness, we give a proof of this result in
Section~\ref{sec:ghost}. However, we use another method called {\em
  ghost discharging}, that we present in Section~\ref{sec:ghost}.

In Section~\ref{sec:mad4upper}, we give the proof of upper bounds of
$\chi(G^2)$ for $\mad(G) < 4$, and in Section~\ref{sec:mad4lower}, we
present a generic construction that allows to extend the lower bound
obtained in~\cite{KP} for graphs with $\mad <4$.

%We then refine this bound for some values of $\Delta$ in Section~\ref{sec:mad4upper} when we restrict the mad of the considered graph. In  Section~\ref{sec:mad4lower}, we present a generic construction that allows to extend the lower bound obtained in~\cite{KP} for graphs with $\mad <4$.

% In~\cite{DP}, Dvo\r{r}\'ak and Postle introduced a new variant of graph coloring called {\it correspondence coloring} which generalizes list coloring and allows for reductions previously only possible for ordinary coloring. The correspondence chromatic number of $G$, $\chi_{corr}(G)$, is  the  smallest  integer $k$ such that $G$ is $C$-colorable for every $k$-correspondence assignment $C$.

%%%%%%%%%%%%%%%%%%%%%%%%%%%%%%%%%%%%%%%%%%%%%%%
\section{Generic Upper Bound}
\label{sec:ghost}

In this section, we include a proof of the following result for
completeness. 
\begin{theorem}[\cite{cc}]
  \label{thm:cc}
  Let $k$ be an integer and $G$ be a graph with $\mad(G) <2k$. Then
  \[\chi(G^2)\leqslant \max \{(2k-1)\Delta(G)-k^2+k+1,
    (2k-2)\Delta(G)+2k^3+k^2+2,(k-1)\Delta(G)+k^4+2k^3+2\}\]
\end{theorem}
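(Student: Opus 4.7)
The plan is to proceed by minimal counterexample: suppose $G$ minimizes $|V(G)|+|E(G)|$ among graphs with $\mad(G)<2k$ and $\chi(G^2)>M$, where $M$ denotes the right-hand side of the claimed inequality. By minimality, for every $v\in V(G)$ the graph $(G-v)^2$ admits a proper coloring with at most $M$ colors. To derive a contradiction it therefore suffices to find a vertex $v$ whose closed $2$-neighborhood $N_G^{\leq 2}[v]$ has size at most $M$, since such a coloring extends to $v$ by the pigeonhole principle. In other words, the goal is to prove that $G^2$ is $M$-degenerate, and this will be achieved by combining local reducibility lemmas with a sparsity argument.

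First I would establish three reducible configurations, one for each term of the maximum. A vertex of degree at most $k-1$ satisfies $|N^{\leq 2}[v]|\leq (k-1)\Delta(G)+1$, which is below the third term of $M$. A vertex of degree $d$ with $k\leq d\leq 2k-2$ whose neighbors all have moderate degree (a bound of order $k^2$) satisfies $|N^{\leq 2}[v]|\leq (2k-2)\Delta(G)+2k^3+k^2+2$, matching the second term. Finally, a vertex of degree exactly $2k-1$ embedded in a sufficiently dense local pattern — enough common neighbors among its neighborhood to save $k^2-k$ in the second-neighbor count — gives the first, leading term. In each case reducibility is the same short argument: delete $v$, color by minimality, extend.

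The combinatorial core is a discharging argument showing that a graph with $\mad(G)<2k$ cannot simultaneously avoid all three reducible configurations. I would assign each vertex the initial charge $\mu(v)=d(v)-\mad(G)$, so that $\sum_v\mu(v)<0$. The point of the \emph{ghost discharging} method to be introduced in this section, as I understand it, is to attach additional ``ghost'' sinks to local patterns around low-degree vertices (for instance one ghost per vertex of degree between $k$ and $2k-1$), and to route excess charge from the high-degree vertices through these ghosts to the true recipients. This indirection localizes the accounting: each ghost receives a well-defined amount, and one only has to check that it passes on enough charge to cover the deficit $\mad(G)-d(v)$ of the vertex it is attached to. Concluding that every vertex ends with nonnegative charge contradicts the strict negativity of the total.

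The main obstacle I anticipate is the calibration of the additive constants. Each of the three reducibility statements must be strong enough to ensure the extension step (so that $|N^{\leq 2}[v]|\leq M$ really holds) and simultaneously weak enough that its \emph{negation} supplies the discharging argument with enough local structure to push charge around. The constants $k^2$, $2k^3+k^2+2$ and $k^4+2k^3+2$ appearing in the three terms are exactly the minimal tradeoffs that make both requirements compatible; they come from counting second-neighbor multiplicities and the maximum number of neighbors one needs to control when a low-degree vertex is close to another low-degree vertex. Carrying out this case analysis over all admissible degree profiles of a vertex and its neighborhood, and verifying that the ghost discharging rules balance in each case, is where the bulk of the technical work lies.
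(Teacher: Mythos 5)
Your overall architecture (minimal counterexample, three reducible configurations matched to the three terms of the maximum, a discharging phase) is the right family of ideas, but the central mechanism you propose for reducibility fails. You delete a vertex $v$, invoke minimality to color $(G-v)^2$, and extend to $v$ by pigeonhole over $N^{\leq 2}[v]$. The problem is that $(G-v)^2$ is not $G^2-v$: two neighbors $x,y$ of $v$ whose only common neighbor is $v$ are adjacent in $G^2$ but may be non-adjacent in $(G-v)^2$, so the coloring obtained by minimality may already assign them the same color, and no choice of color for $v$ repairs this. The paper avoids the issue by proving the stronger statement that $G^2$ is $f(k,\Delta)$-degenerate (with $f=M-1$), deleting an \emph{edge} $uv$ rather than a vertex --- so that the only square-adjacencies gained when the edge is restored involve $u$ or $v$ themselves --- and then re-appending $u$, $v$ and a collection of $k^-$-vertices at the end of the degeneracy ordering, using that any $k^-$-vertex has back-degree at most $k\Delta\leq f(k,\Delta)$ wherever it is placed.

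Beyond that, the configurations you sketch are not the ones that can be made to work, and the leading term does not come from where you locate it. The paper's conditions are phrased in terms of $D(v)$, the number of $(k+1)^+$-neighbors of $v$, not of $d(v)$: the term $(2k-1)\Delta-k^2+k$ arises from a $k^-$-vertex adjacent to a vertex $v$ with $D(v)\leq k$, where the saving $k^2-k$ comes from the $\Delta-k$ neighbors of $v$ that are themselves $k^-$-vertices, each contributing at most $k-1$ second neighbors --- not from common neighbors in a dense local pattern around a vertex of degree $2k-1$. Likewise, a vertex of degree at most $k-1$ is not reducible and need not be: such vertices are exactly the ``ghosts'' of the discharging, required to end with charge at least $d(v)+D(v)-2k$ rather than $0$, with the final contradiction coming from the subgraph induced by the $(k+1)^+$-vertices having average degree at least $2k$. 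The third configuration needs a genuinely two-case notion of ``$D$-light'' vertex whose thresholds produce the constants $2k^3+k^2+1$ and $k^4+2k^3+1$; none of this calibration is recoverable from the sketch, and the one extension mechanism you do specify would not go through.
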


In the following, we give two improvements: first, we rewrite the
original proof using only degeneracy. This allows to directly extend
Theorem~\ref{thm:cc} to generalized notions of coloring such as
list-coloring, or correspondence coloring~\cite{DP}.  Moreover, the
original proof uses discharging. We give a shorter proof using a
variant of discharging relying on the notion of \emph{ghost vertices}
defined below. This allows to fix some errors and inaccuracies of the
original proof. We actually prove the following.

\begin{theorem}
  \label{thm:maincc}
  Let $k$ be an integer and $G$ be a graph with $\mad(G) <2k$. Then
  $G^2$ is $f(k,\Delta)$-degenerate, where
  $f(k,\Delta)= \max \{(2k-1)\Delta(G)-k^2+k,
  (2k-2)\Delta(G)+2k^3+k^2+1, (k-1)\Delta(G)+k^4+2k^3+1\}$. 
\end{theorem}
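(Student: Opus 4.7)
The plan is to take a counterexample $G$ of minimum order and, by discharging on $G$, exhibit a vertex $v$ with $|N_{G^2}(v)| \leqslant f(k,\Delta)$; since such a $v$ contradicts the assumption that $G^2$ is not $f(k,\Delta)$-degenerate, this suffices. The elementary bound driving the whole argument is $|N_{G^2}(v)| \leqslant \sum_{u \in N_G(v)} d_G(u)$, so a vertex of moderate $G$-degree whose neighbours have controlled degrees is automatically ``good'' for degeneracy. The role of the hypothesis $\mad(G) < 2k$ is precisely to guarantee, through discharging, the existence of such a configuration.

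I would assign each vertex $v$ the initial charge $d_G(v) - 2k$, so that $\mad(G) < 2k$ makes the total initial charge strictly negative. The rules rely on \emph{ghost vertices}: rather than transferring charge along edges between actual vertices, each vertex donates a fractional amount to ghost vertices attached to its neighbours, and those ghosts then redistribute charge according to carefully chosen rules. This decoupling sharpens the case analysis and repairs the inaccuracies of the original discharging argument. Assuming for contradiction that every vertex of $G$ has $G^2$-degree strictly exceeding $f(k,\Delta)$, the rules should be set up so that every real vertex and every ghost ends with non-negative final charge, contradicting the negativity of the total charge.

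The maximum of three expressions in $f(k,\Delta)$ reflects three distinct reducible local configurations around a vertex $v$. The first term $(2k-1)\Delta - k^2 + k$ arises when $v$ has degree at most $2k-1$ with all neighbours of near-maximum degree; the second term $(2k-2)\Delta + 2k^3 + k^2 + 1$ corresponds to $v$ having at most $2k-2$ neighbours of high degree and the remaining neighbours of moderate degree; and the third term $(k-1)\Delta + k^4 + 2k^3 + 1$ allows at most $k-1$ neighbours of high degree together with many neighbours of small degree. In each case the elementary bound above yields exactly the corresponding expression for $|N_{G^2}(v)|$. The main obstacle I anticipate is calibrating the thresholds separating ``high'' and ``low'' degrees, together with the fractional charge transfers, so that the three cases match these three expressions precisely; the polynomial-in-$k$ additive constants emerge from this calibration, and the boundary regimes where two of the three expressions are comparable will be the most delicate bookkeeping.
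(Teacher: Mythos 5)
There is a genuine gap at the very first step: exhibiting a single vertex $v$ with $d_{G^2}(v)\leqslant f(k,\Delta)$ does not contradict the assumption that $G^2$ is not $f(k,\Delta)$-degenerate. Non-degeneracy means that \emph{some subgraph} of $G^2$ has minimum degree exceeding $f(k,\Delta)$; the square itself may well contain a low-degree vertex anyway, so your contradiction hypothesis (``every vertex of $G$ has $G^2$-degree strictly exceeding $f(k,\Delta)$'') is the wrong negation. The natural repair --- delete $v$, apply minimality to $G-v$, and append $v$ to the resulting ordering --- also fails, because $(G-v)^2$ is in general a proper subgraph of $G^2-v$ (paths of length two through $v$ disappear), so a degeneracy ordering of $(G-v)^2$ need not be one for $G^2-v$. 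This is exactly why the paper's reducible configurations are phrased in terms of orderings: one deletes an \emph{edge} $uv$ from a counterexample minimizing the number of edges, takes an ordering witnessing that $(G\setminus uv)^2$ is $f(k,\Delta)$-degenerate, moves $u$, $v$ and a prescribed set of $k^-$-vertices to the end, and checks that every moved vertex is preceded by at most $f(k,\Delta)$ of its $G^2$-neighbours (the unmoved vertices are unaffected precisely because the edges created by restoring $uv$ all point to vertices placed at the end). All three expressions in $f(k,\Delta)$ arise from these counting arguments in the reducibility proofs, not from the discharging phase as you suggest.

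A second, related problem is that the information available to the discharging is much finer than anything expressible through $G^2$-degrees. The paper's discharging needs, for each vertex, the parameter $D(v)$ counting its $(k+1)^+$-neighbours, and the reducible configurations forbid: a $k^-$-vertex adjacent to a vertex with $D(v)\leqslant k$; a $k^-$-vertex adjacent to a vertex with $k<D(v)<2k$ having few neighbours of large $D$; and a vertex $u$ with $k<D(u)<2k$, no $k^-$-neighbour, adjacent to a ``$D(u)$-light'' vertex. These constraints on the degree distribution in second neighbourhoods cannot be recovered from a single lower bound on $|N_{G^2}(v)|$, so the calibration you describe has nothing to calibrate against. Finally, a smaller point: the ghost method here does not attach auxiliary charge-redistributing vertices; it relaxes the target final charge of the $k^-$-vertices to $d(v)+D(v)-2k$ and derives the contradiction from $\ad(H)\geqslant 2k$ for the subgraph $H$ induced by the $(k+1)^+$-vertices, rather than from negativity of the total charge on $G$.
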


To prove this result, we use the discharging method. This method was
introduced in~\cite{wernicke} to study the Four Color Conjecture. It
has been used to prove many results on sparse graphs (for example
planar, or with bounded mad), culminating with the Four Color Theorem
from~\cite{AH1,AH2}. This method leads to two-step proofs. In a first
step, we prove that if $G$ is a minimum counterexample to the theorem,
it cannot contain some patterns. Then, we prove that every graph from
a given class should contain at least one of these patterns. Put
together, these assertions prove that every graph from the given class
satisfies the theorem.

We thus assume that the theorem is false and take a graph $G$ with
$\mad(G)<2k$ and maximum degree $\Delta$, such that $G^2$ is not
$f(k,\Delta)$-degenerate. In subsection~\ref{sub:cc_config}, we give
some configurations and show they are not contained in $G$ (such a
configuration is called reducible). Then, in
Subsection~\ref{sub:cc_rules}, we use the ghost vertices method to
reach a contradiction.

%%%%%%%%%%%%%%%%%%%%%%%%%%%%%%%%%%%%%%%%%%%%%%%%%%%%%%%%%%%
\subsection{Reducible configurations}
\label{sub:cc_config}

Given a vertex $v\in V(G)$, we denote by $d(v)$ its degree in $G$, and
by $D(v)$ the number of $(k+1)^+$-vertices adjacent to $v$ in $G$. 

\begin{proposition}
  \label{prop:C1}
  The graph $G$ does not contain a $k^-$-vertex $u$ adjacent to a
  vertex $v$ with $D(v)\leqslant k$.
\end{proposition}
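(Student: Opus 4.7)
The plan is to proceed by a minimum-counterexample argument. Assume $G$ is a minimum counterexample to Theorem~\ref{thm:maincc}---say, with minimum $|V(G)|$---and suppose toward contradiction that $G$ contains a $k^-$-vertex $u$ adjacent to a vertex $v$ with $D(v)\leq k$. The goal is to exhibit an $f(k,\Delta)$-degeneracy ordering of $G^2$, contradicting the assumption that $G^2$ is not $f(k,\Delta)$-degenerate.

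The natural first move is to set $G'=G-u$. Since $\mad(G')\leq\mad(G)<2k$ and $|V(G')|<|V(G)|$, the minimality of $G$ yields an $f(k,\Delta)$-degeneracy ordering $\sigma'=(v_1,\ldots,v_{n-1})$ of $(G')^2$. I would then append $u$ to obtain $\sigma=(v_1,\ldots,v_{n-1},u)$ and verify back-degrees in $G^2$ under $\sigma$ vertex by vertex.

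For the back-degree of $u$ itself, the classical estimate gives
\[\deg_{G^2}(u)\leq \sum_{w\in N_G(u)} d(w)\leq k\Delta,\]
using $d(u)\leq k$ and $d(w)\leq\Delta$. Since $f(k,\Delta)\geq (2k-1)\Delta-k^2+k$ and $(2k-1)\Delta-k^2+k-k\Delta=(k-1)(\Delta-k)\geq 0$ whenever $\Delta\geq k$ (the case $\Delta<k$ reducing to the trivial bound $\Delta(G^2)\leq\Delta^2\leq f(k,\Delta)$), the back-degree of $u$ is at most $f(k,\Delta)$, as required.

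The main obstacle will be controlling the back-degrees of the remaining vertices. Passing from $(G')^2$ to $G^2$ introduces new edges among $N_G(u)$, since any two neighbors of $u$ are at $G$-distance at most two through $u$; a vertex $w\in N_G(u)$ can therefore gain up to $d(u)-1\leq k-1$ additional back-neighbors relative to $(G')^2$. To absorb this potential increase while remaining below $f(k,\Delta)$, I expect the hypothesis $D(v)\leq k$ to be essential: either by bounding structurally the $(G')^2$-back-degrees of the vertices in $N_G(u)$, or by allowing one to refine the choice of $\sigma'$ so that the neighbors of $u$ appear early enough (with $(G')^2$-back-degree at most $f(k,\Delta)-(k-1)$) to accommodate the $k-1$ extra edges. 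Carrying out this tight accounting simultaneously for every neighbor of $u$---and in particular exploiting the fact that most of $v$'s neighbors are low-degree, so the relevant subgraph around $v$ is sparse---is the delicate technical point of the argument.
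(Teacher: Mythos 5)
There is a genuine gap, and you acknowledge it yourself: the final paragraph of your proposal identifies ``the delicate technical point'' (controlling the back-degrees of the vertices of $N_G(u)$ after the new $G^2$-edges among them appear) and then leaves it unresolved, offering only two guesses at how the hypothesis $D(v)\leqslant k$ might enter. Neither guess works as stated: the hypothesis constrains a \emph{single} neighbor $v$ of $u$, so it gives no structural control over the $(G-u)^2$-back-degrees of the other neighbors of $u$, and you cannot in general ``refine the choice of $\sigma'$'' to push up to $\Delta$ prescribed vertices early in a degeneracy ordering. The difficulty is an artifact of your choice of reduction: by deleting the \emph{vertex} $u$, you allow every pair of neighbors of $u$ to become newly adjacent in $G^2$, so arbitrarily many vertices may see their back-degree increase. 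The paper instead deletes the \emph{edge} $uv$. Any path of length at most two through the edge $uv$ has $u$ or $v$ as an endpoint, so passing from $(G\setminus uv)^2$ to $G^2$ only enlarges the neighborhoods of $u$ and $v$; every other vertex keeps a valid position in the inherited ordering, and only $u$ and $v$ need to be re-placed.

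The second missing ingredient is the actual use of $D(v)\leqslant k$. In the paper's argument one removes $u$, $v$ and \emph{all} $k^-$-vertices of $G$ from the inherited ordering and appends them at the end, with $v$ first. Each $k^-$-vertex has at most $k\Delta\leqslant f(k,\Delta)$ neighbors in all of $G^2$, so its placement is harmless (this matches your bound for $u$). For $v$, the $G^2$-neighbors that still precede it are reached either through one of its $D(v)\leqslant k$ neighbors of degree at least $k+1$ (at most $\Delta$ vertices each) or through one of its $k^-$-neighbors, which are themselves removed and contribute at most $k-1$ further vertices each; this gives the bound $D(v)\Delta+(\Delta-D(v))(k-1)\leqslant (2k-1)\Delta-k^2+k\leqslant f(k,\Delta)$. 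Your proposal contains neither this counting step nor any concrete substitute for it, so it does not constitute a proof of the proposition.
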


\begin{proof}
  Assume that $G$ contains such a configuration. By minimality,
  $(G\setminus uv)^2$ is $f(k,\Delta)$-degenerate. Take $\sigma$ an
  ordering witnessing this degeneracy, and remove $u,v$ and every
  $k^-$-vertex of $G$ from $\sigma$.

  We prove that $v$ has at most $f(k,\Delta)$ neighbors in $G^2$ that
  remains in $\sigma$. Then, since each $k^-$-vertex is adjacent to at
  most $k\Delta<f(k,\Delta)$ vertices in $G^2$, we obtain that
  $G^2$ is $f(k,\Delta)$-degenerate, a contradiction.

  By hypothesis, $D(v)\leqslant k$. Thus, the number of vertices
  appearing before $v$ in $\sigma$ is at most
  \[D\Delta+(\Delta-D)(k-1)\leqslant
    k\Delta+(\Delta-k)(k-1)=(2k-1)\Delta-k^2+k\leqslant f(k,\Delta)\qedhere\]
\end{proof}

\begin{proposition}
  \label{prop:C2}
  The graph $G$ does not contain a $k^-$-vertex $u$ with a neighbor
  $v$ satisfying:
  \begin{itemize}
  \item[$\bullet$] $k <D(v)< 2k$
  \item[$\bullet$] $v$ has at most $k-1$ neighbors $w$ with
    $D(w)\geqslant \frac{2k^2}{D(v)-k}$.
  \end{itemize}
\end{proposition}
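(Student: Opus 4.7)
The plan is to mimic the argument for Proposition~\ref{prop:C1}. I would assume $G$ contains the described configuration and use minimality to obtain an ordering $\sigma$ witnessing that $(G \setminus uv)^2$ is $f(k,\Delta)$-degenerate. After deleting $u$, $v$, and every $k^-$-vertex of $G$ from $\sigma$, I would append $v$ at the tail of the trimmed ordering, followed by $u$ and the remaining $k^-$-vertices. For vertices in the trimmed part, the degeneracy inequality is inherited from $\sigma$ since $G^2[S] = (G\setminus uv)^2[S]$ when $S$ excludes $u,v$ and all $k^-$-vertices. Each $k^-$-vertex (including $u$) has $G^2$-degree at most $k\Delta < f(k,\Delta)$, so it poses no obstruction. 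The whole proof thus reduces to bounding the number of $G^2$-neighbors of $v$ that remain in the trimmed $\sigma$, namely the $(k+1)^+$-vertices distinct from $v$ at distance at most $2$ from $v$ in $G$.

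Writing $D = D(v)$, I let $t$ be the number of $(k+1)^+$-neighbors $w$ of $v$ with $D(w) \geq \frac{2k^2}{D - k}$, so that by hypothesis $t \leq k - 1$. I would split the neighbors of $v$ into $t$ heavy $(k+1)^+$-neighbors, $D - t$ light $(k+1)^+$-neighbors, and at most $\Delta - D$ many $k^-$-neighbors. For each neighbor $x$ of $v$, I would bound the number of remaining $(k+1)^+$-vertices contained in $\{x\} \cup (N(x) \setminus \{v\})$: by $\Delta$ if $x$ is heavy (via $D(x) \leq \Delta$), by $\frac{2k^2}{D-k}$ if $x$ is light (by the threshold), and by $k - 1$ if $x$ is $k^-$ (since $x$ itself has been deleted and has at most $k - 1$ neighbors other than $v$). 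Summing over $x \in N(v)$ yields the upper bound
\[
t\Delta + (D - t)\tfrac{2k^2}{D-k} + (\Delta - D)(k - 1),
\]
where a harmless over-count occurs when a vertex is reachable via several $x$'s.

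Because this expression is affine in $t$, its maximum on $\{0, \ldots, k - 1\}$ is attained at an endpoint. Using $D - k \geq 1$ and $D \geq k + 1$, a short calculation should show that at $t = k - 1$ the bound collapses to at most $(2k - 2)\Delta + O(k^2)$, which is dominated by the second term of $f(k, \Delta)$, while at $t = 0$ it collapses to at most $(k - 1)\Delta + O(k^3)$, dominated by the third term. In either case the bound is at most $f(k, \Delta)$, producing the required contradiction.

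The main obstacle is not the degeneracy bookkeeping, which is identical to that of Proposition~\ref{prop:C1}, but the selection of the threshold $\frac{2k^2}{D - k}$. It must be small enough that the light-neighbor contribution $(D - t)\tfrac{2k^2}{D-k}$ stays below the $k^4$-level constant in the third term of $f(k,\Delta)$ when $t = 0$, yet large enough that at most $k - 1$ neighbors qualify as heavy so that the $t\Delta$ contribution fits inside the $(2k - 2)\Delta$ coefficient of the second term when $t = k - 1$. Once this balance is struck, verifying the two endpoint inequalities is elementary.
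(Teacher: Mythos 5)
Your proposal is correct and follows essentially the same route as the paper: same reduction to bounding the $G^2$-neighbors of $v$ remaining in the trimmed ordering, the identical count $h\Delta+(D-h)\tfrac{2k^2}{D-k}+(\Delta-D)(k-1)$, and the same endpoint analysis in the number of heavy neighbors (the paper phrases it as a case split on $\Delta\geqslant 2k^2$ to decide which endpoint is the maximum, then substitutes $D(v)=k+1$, yielding exactly your two bounds $(2k-2)\Delta+3k^2+1$ and $(k-1)\Delta+2k^3+k^2+1$). The only detail worth making explicit is that $v$ itself is a $(k+1)^+$-vertex (since $D(v)>k$), which is what lets you charge only $D(x)$ rather than $D(x)+1$ remaining vertices to each light neighbor $x$.
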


\begin{proof}
  Assume that $G$ contains such a configuration.  Again, consider an
  ordering $\sigma$ witnessing that $(G\setminus uv)^2$ is
  $f(k,\Delta)$-degenerate, and remove $u,v$ and every $k^-$-vertex of
  $G$ from $\sigma$. Denote by $h$ the number of neighbors $w$ of $v$
  satisfying $D(w)\geqslant \frac{2k^2}{D(v)-k}$. By hypothesis,
  $h<k$.

  Again, since a $k^-$-vertex has at most $k\Delta$ neighbors in $G^2$
  and $k\Delta \leqslant f(k,\Delta)$, it is sufficient to prove that
  $v$ has at most $f(k,\Delta)$ neighbors in $G^2$ that remain in
  $\sigma$. The number of such vertices is at most
  \[h\Delta+(D(v)-h)\frac{2k^2}{D(v)-k} +
    (\Delta-D(v))(k-1)=(k+h-1)\Delta-D(v)(k-1)+2k^2+\frac{2k^2(k-h)}{D(v)-k}\]
  Since $h<k$, this is a decreasing function of $D(v)$. Hence it is at most
  \[(k+h-1)\Delta+k^2+1+2k^2(k-h)\]
  \begin{itemize}
  \item[$\bullet$] If $\Delta\geqslant 2k^2$, this is increasing in $h$, and thus
    at most
    \[(2k-2)\Delta+3k^2+1\leqslant f(k,\Delta)\]
  \item[$\bullet$] Otherwise, it is decreasing in $h$, thus at most
    \[(k-1)\Delta+2k^3+k^2+1<f(k,\Delta)\qedhere\]
  \end{itemize}
\end{proof}

To state the last reducible configuration, we introduce the notion of
light vertex. If $k <D<2k$, a vertex $v$ is \emph{$D$-light} if
\begin{itemize}
\item[$\bullet$] either $k+1\leqslant D(v) < k+\frac{Dk}{2D-2k}$ and $v$ has at
  most $k-1$ neighbors $w$ with
  $D(w)\geqslant \frac{k^2D}{(D-k)(D(v)-k)}$.
\item[$\bullet$] or $k+\frac{Dk}{2D-2k} \leqslant D(v) <\frac{Dk}{D-k}$ and $v$
  has less than $D(v)-\frac{(D(v)-2k)D}{2k-D}$ neighbors $w$ with
  $D(w) \geqslant 2k$.
\end{itemize}
We may then state our last reducible configuration.

\begin{proposition}
  \label{prop:C3}
  The graph $G$ does not contain a vertex $u$ with $k<D(u)<2k$, no
  $k^-$-neighbor and adjacent to a $D(u)$-light vertex $v$.
\end{proposition}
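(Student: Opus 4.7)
The plan is to follow the strategy of Propositions~\ref{prop:C1} and~\ref{prop:C2}. Assume $G$ contains such a configuration and set $D = D(u)$. By minimality, let $\sigma$ be an ordering witnessing that $(G \setminus uv)^2$ is $f(k,\Delta)$-degenerate, and remove from $\sigma$ the vertices $u$, $v$, and every $k^-$-vertex of $G$. Since every edge of $G^2$ absent from $(G\setminus uv)^2$ is incident to $u$ or $v$, the restricted ordering still witnesses $f(k,\Delta)$-degeneracy on the remaining vertices. We then extend the ordering by appending $v$, $u$, and all $k^-$-vertices at the end. Each $k^-$-vertex has at most $k\Delta < f(k,\Delta)$ neighbors in $G^2$, so it suffices to bound the number of $G^2$-neighbors of $u$ and $v$ that remain in the restricted $\sigma$.

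For $u$, all its $D$ neighbors in $G$ are $(k+1)^+$-vertices. Summing contributions, the $D - 1$ neighbors different from $v$ each contribute at most $\Delta$ (themselves plus at most $\Delta - 1$ further neighbors), while $v$ contributes at most $D(v) - 1$ second-neighbors (as $v \notin \sigma$). Using $D \leqslant 2k-1$ together with $D(v) < \frac{Dk}{D-k}$, which holds in both subcases of $D$-lightness, this is at most $(2k-2)\Delta + k^2+k-1$, comfortably smaller than the second term $(2k-2)\Delta + 2k^3+k^2+1$ of the maximum defining $f(k,\Delta)$.

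The main work is bounding the analogous quantity for $v$. Partition $N_G(v)$ as: the vertex $u$ (contributing at most $D-1$, since $u \notin \sigma$); heavy $(k+1)^+$-neighbors different from $u$ (each contributing at most $\Delta$); light $(k+1)^+$-neighbors (each contributing at most the relevant threshold, namely $\frac{k^2D}{(D-k)(D(v)-k)}$ in Case~1 and $2k$ in Case~2 of $D$-lightness); and $k^-$-neighbors (each contributing at most $k-1$). A direct computation shows that $u$ itself is never heavy: in Case~1, $D(v) < k + \frac{Dk}{2(D-k)}$ together with $D < 2k$ forces $(D-k)(D(v)-k) < k^2$, so the threshold strictly exceeds $D$; in Case~2, the threshold $2k$ trivially exceeds $D$. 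Hence the number of heavy non-$u$ neighbors is bounded exactly as given by the $D$-lightness hypothesis (at most $k-1$ in Case~1 and strictly less than $D(v)-\frac{(D(v)-2k)D}{2k-D}$ in Case~2).

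Substituting these bounds produces an affine function of $\Delta$ whose slope is at most $2k-2$ and whose constant depends polynomially on $k$. A monotonicity argument in the number $h$ of heavy neighbors and in $D(v)$ (within the range of the current subcase), mirroring the end of the proof of Proposition~\ref{prop:C2} and distinguishing the cases $\Delta \geqslant 2k^2$ and $\Delta < 2k^2$, reduces this to one of the three terms of the maximum defining $f(k,\Delta)$. The main obstacle is precisely this case analysis: the thresholds in the definition of $D$-lightness are calibrated so that the bound matches $f(k,\Delta)$ with only constant slack, and the four subcases (two $D$-light subcases times two ranges of $\Delta$) must be verified separately. Once both counts are shown to be at most $f(k,\Delta)-1$, appending $v$ before $u$ (the extra edge $uv$ being absorbed by the slack) extends $\sigma$ into an $f(k,\Delta)$-degeneracy ordering of $G^2$, contradicting the minimality of $G$.
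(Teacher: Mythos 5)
Your setup is essentially the paper's: by minimality take a degeneracy ordering of $(G\setminus uv)^2$, pull out $u$, $v$ and the relevant $k^-$-vertices, re-append them in the order $v$, then $u$, then the $k^-$-vertices, and bound the number of $G^2$-neighbours of $u$ and of $v$ that precede them. (The paper removes only the $k^-$-neighbours of $v$ rather than all $k^-$-vertices; your variant is harmless and arguably cleaner, since it guarantees that only $(k+1)^+$-vertices ever need to be counted for $u$ and $v$.) Your count for $u$, namely $(D(u)-1)\Delta+D(v)\leqslant(2k-2)\Delta+k^2+k-1$ via $D(v)<\frac{D(u)k}{D(u)-k}\leqslant k^2+k$, matches the paper's $(2k-2)\Delta+k^2+k+1$, and your partition of $N(v)$ into $u$, heavy neighbours, light neighbours and $k^-$-neighbours, with respective contributions $D(u)-1$, $\Delta$, the lightness threshold, and $k-1$, is exactly the partition the paper uses.

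The gap is that the decisive step --- verifying that the resulting affine functions of $\Delta$ are at most $f(k,\Delta)$ --- is asserted rather than performed, and this verification is the entire substance of the proposition: the thresholds in the definition of $D$-light are calibrated so that these estimates close with essentially no slack, and only the algebra confirms that they do. Concretely, in the first lightness subcase one must check that the count is decreasing in $D(v)$ (using $h<k$), then decreasing in $D(u)$, and then split on $\Delta\geqslant k^3+k^2$ versus $\Delta<k^3+k^2$ to land on the second or third term of $f(k,\Delta)$; in the second subcase one first needs the (non-obvious) observation that $D(v)-\frac{(D(v)-2k)D(u)}{2k-D(u)}\leqslant k$, so that $h\leqslant k-1$, and the relevant split is on $\Delta\geqslant 2k-1$. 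Your proposed case split at $\Delta\geqslant 2k^2$ is the one from Proposition~\ref{prop:C2} and is not the one that actually arises here --- a sign that the computation was not carried out. The structure of your argument is sound and coincides with the paper's, but without these four verifications the proof is incomplete.
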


\begin{proof}
  Assume that $G$ contains such a configuration. Again, consider an
  ordering $\sigma$ witnessing that $(G\setminus uv)^2$ is
  $f(k,\Delta)$-degenerate, and remove $u,v$ and every $k^-$-neighbor
  of $v$ from $\sigma$. We consider the ordering $\sigma'$ obtaining
  by appending $v$, then $u$, then the removed $k^-$-vertices to
  $\sigma$.

  Again, since a $k^-$-vertex has at most $k\Delta$ neighbors in $G^2$
  and $k\Delta \leqslant f(k,\Delta)$, it is sufficient to prove that
  $u$ and $v$ have at most $f(k,\Delta)$ neighbors in $G^2$ that
  appear previously in $\sigma'$.

  We first count the $(k+1)^+$-neighbors of $u$ in $G^2$: there are
  $v$, the $(k+1)^+$-neighbors of $v$, and the neighbors of the
  $D(u)-1$ neighbors of $u$. Thus, there are at most
  \[1+D(v)+(D(u)-1)\Delta\leqslant
    1+\frac{D(u)k}{D(u)-k}+(2k-2)\Delta\] neighbors of $u$. This is a
  decreasing function of $D(u)$, hence it is at most
  \[(2k-2)\Delta+k^2+k+1 \leqslant f(k,\Delta)\]

  For $v$, we consider two cases according to the definition of
  $D(u)$-light vertex.
  \begin{itemize}
  \item[$\bullet$] Assume that $k+1\leqslant D(v) < k+\frac{D(u)k}{2D(u)-2k}$ and
    $v$ has $h$ neighbors $w$ with
    $D(w)\geqslant \frac{k^2D(u)}{(D(u)-k)(D(v)-k)}$.
    
    Then, in $G^2$, the number of $(k+1)^+$-neighbors $v$ besides $u$
    is at most:
    \begin{align*}
      &(\Delta-D(v))(k-1)+ h\Delta+(D(v)-h)\frac{k^2D(u)}{(D(u)-k)(D(v)-k)}\\
      &=(k+h-1)\Delta-D(v)(k-1)+\frac{k^2D(u)}{D(u)-k}+\frac{(k-h)k^2D(u)}{(D(u)-k)(D(v)-k)}.
    \end{align*}
    Since $h<k$, this is a decreasing function of $D(v)$, hence at
    most
    \[(k+h-1)\Delta-(k+1)(k-1)+(k-h+1)k^2+\frac{(k-h+1)k^3}{D(u)-k}\]
    This is decreasing in $D(u)$, hence at most 
    \[(k+h-1)\Delta-(k+1)(k-1)+(k-h+1)(k^3+k^2)\]
    \begin{itemize}
    \item If $\Delta\geqslant k^3+k^2$, this is an increasing function
      of $h$, hence it is at most
      \[(2k-2)\Delta+k^2+1+2k^3\leqslant f(k,\Delta)\]
    \item Otherwise, this is a decreasing function of $h$, hence it is
      at most
      \[(k-1)\Delta+k^4+2k^3+1\leqslant f(k,\Delta)\]
    \end{itemize}
  \item[$\bullet$] Assume that
    $k+\frac{D(u)k}{2D(u)-2k} \leqslant D(v) <\frac{D(u)k}{D(u)-k}$
    and $v$ has $h$ neighbors $w$ with $D(w) \geqslant 2k$, where $h$
    is less than $D(v)-\frac{(D(v)-2k)D(u)}{2k-D(u)}$.

    First observe that
    \[D(v)-\frac{(D(v)-2k)D(u)}{2k-D(u)}=\frac{2D(u)k-(2D(u)-2k)D(v)}{2k-D(u)}\] which is a
    decreasing function of $D(v)$, hence it is at most $k$ since
    $D(v)\geqslant k+\frac{D(u)k}{2D(u)-2k}$. Hence $h\leqslant k-1$.

    Consider the $(k+1)^+$-neighbors of $v$ in $G^2$ (excepted
    $u$). There are at most
    \[h\Delta +(\Delta-D(v))(k-1)+ (2k-1)(D(v)-h)=(k+h-1)\Delta+kD(v)-h(2k-1)\]
    such vertices.  This is increasing in $D(v)$, hence at most
    \[(k+h-1)\Delta+\frac{k^2D(u)}{D(u)-k}-k-h(2k-1)\] 
    This is at decreasing in $D(u)$, hence at most
    \[(k+h-1)\Delta+k^2(k+1)-k-h(2k-1)\] 
    \begin{itemize}
    \item If $\Delta \geqslant 2k-1$, this is increasing in $h$, hence at most
      \[(2k-2)\Delta+k^3-k^2-4k-1\leqslant f(k,\Delta)\]
    \item Otherwise, this is decreasing in $h$, hence at most 
      \[(k-1)\Delta+k^3+k^2-k\leqslant f(k,\Delta)\]
    \end{itemize}
  \end{itemize}
\end{proof}

%%%%%%%%%%%%%%%%%%%%%%%%%%%%%%%%%%%%%%%%%%%%%%%%%%%%%
\subsection{Ghost vertices}
\label{sub:cc_rules} 
To reach a contradiction, we use the discharging method. Moreover, we
consider a so called \emph{Ghost vertices method}, introduced earlier by Bonamy, Bousquet and Hocquard~\cite{BonamyEtAl}.

We begin by giving a weight $\omega(v)=d(v)-2k$ to each vertex of
$G$. We then design some rules in order to redistribute the weights on
$G$ so that the final weights $\omega'$ satisfy: 
\begin{itemize}
\item[$\bullet$] $\omega'(v)\geqslant 0$ if $d(v)>k$.
\item[$\bullet$] $\omega'(v)\geqslant d(v)+D(v)-2k$ if $d(v)\leqslant k$.
\end{itemize}
In this case, we say that $v$ is \emph{happy}. We first prove that we
reach a contradiction if every vertex is happy. Let $H$ be the
subgraph of $G$ induced by the $(k+1)^+$-vertices. Observe that
\[\sum_{u\in G\setminus H} D(u) = |E(H,G\setminus H)|=\sum_{u\in H} (d(u)-D(u))\]
Thus, we have
\begin{align*}
  \sum_{u\in H} (D(u)-2k)&= \sum_{u\in H} (d(u)-2k)- \sum_{u\in H} (d(u)-D(u))\\
&= \sum_{u\in G} (d(u)-2k)- \sum_{u\in G\setminus H} (d(u)-2k)- \sum_{u\in H} (d(u)-D(u))\\
&= \sum_{u\in G} \omega'(u) - \sum_{u\in G\setminus H} (d(u)-2k)- \sum_{u\in H} (d(u)-D(u))\\
&=\sum_{u\in H} \omega'(u) +\sum_{u\in G\setminus H} (\omega'(u)-d(u)+2k)- \sum_{u\in H} (d(u)-D(u))\\
&=\sum_{u\in H} \omega'(u) +\sum_{u\in G\setminus H} (\omega'(u)-d(u)+2k-D(u))\\
\end{align*}
Each term of the two last sums is non-negative, hence we obtain that
$\mad(G)\geqslant \ad(H) \geqslant 2k$, a contradiction. This thus
ends the proof of Theorem~\ref{thm:maincc}.

%In other words, the vertices in $G\setminus H$ can be seen but, in a way, do not contribute to the sum analysis.

We consider three discharging rules that we apply in order:
\begin{itemize}
\item[$\bullet$] $R_0$: Every vertex in $H$ gives $1$ to each of its neighbors
  outside $H$.
\item[$\bullet$] $R_1$: Every vertex $u$ with $D(u)\geqslant 2k+1$ gives
  equitably all its weight to its neighbors $v$ in $H$ with $D(v)<2k$.
\item[$\bullet$] $R_2$: Every vertex with positive weight gives equitably all its
  weight to its neighbors in $H$ with negative weight.
\end{itemize}

We now prove that every vertex is happy. First note that due to $R_0$,
every vertex $v$ in $G\setminus H$ receives a weight of $D(v)$, and is
not affected by $R_1$ and $R_2$. Its final weight is then at least
$d(v)-2k+D(v)$, hence it is happy.

We may thus only consider vertices in $H$. Let $u$ be such a
vertex. We separate several cases depending on $D(u)$. Observe that
after $R_0$, $u$ has weight $D(u)-2k$. We now prove that $u$ ends up
with non-negative weight after $R_1$ and $R_2$. Observe that if, after
applying $R_0$ or both $R_0,R_1$ a vertex ends with non-negative
weight, then it still has non-negative weight after applying the
remaining rules.

\begin{itemize}
\item[$\bullet$] Assume that $D(u)\leqslant k$. Then since $u\in H$, we have
  $d(u)\geqslant k+1$, so $u$ has a $k^-$-neighbor in $G$. This is
  impossible by Proposition~\ref{prop:C1}.
\item[$\bullet$] Assume that $D(u)\geqslant 2k$. Then $u$ has positive weight
  after $R_0$ and $u$ is happy.
\item[$\bullet$] Assume that $k < D(u) < 2k$ and $u$ has a $k^-$-neighbor in
  $G$. Then by Proposition~\ref{prop:C2}, $u$ has at least $k$
  neighbors $v$ with $D(v)\geqslant \frac{2k^2}{D(u)-k}$.
  
  Observe that since $D(u)<2k$, we have $D(v)> 2k$, hence $w$ gives
  weight to $u$ by $R_1$. The amount of such weight is at least
  \[\frac{D(v)-2k}{D(v)}=1-\frac{2k}{D(v)}\geqslant 2-\frac{D(u)}{k}\]
  since the middle term is increasing in $D(v)$. Since there are at
  least $k$ such vertices $w$, $u$ receives at least $2k-D(u)$ and
  thus ends up with non-negative weight after $R_1$. Therefore, $u$ is
  happy.
\item[$\bullet$] Finally, assume that $k < D(u) < 2k$ and $u$ has no
  $k^-$-neighbor in $G$. Let $v$ be a neighbor of $u$ in $H$. We prove
  that $v$ gives at least $\frac{2k}{D(u)}-1$ to $u$ by $R_1$ or
  $R_2$. If true, this would imply that $u$ receives at least
  $2k-D(u)$ and thus ends up with non-negative weight. We separate
  several cases:
  \begin{itemize}
  \item Assume that $D(v)\geqslant \frac{D(u)k}{D(u)-k}$. Then since
    $D(u)<2k$, we have $D(v)>2k$, hence $v$ gives weight to $u$ by
    $R_1$. The amount given is at least
    \[\frac{D(v)-2k}{D(v)}=1-\frac{2k}{D(v)}\geqslant
      1-\frac{2k(D(u)-k)}{D(u)k}=\frac{2k}{D(u)}-1\] as requested.
  \item Assume that
    $k+\frac{D(u)k}{2D(u)-2k} \leqslant D(v)
    <\frac{D(u)k}{D(u)-k}$. Then, by Proposition~\ref{prop:C3}, $v$
    has at least $D(v)-\frac{(D(v)-2k)D(u)}{2k-D(u)}$ neighbors $w$
    with $D(w) \geqslant 2k$.

    Observe that $D(v)\geqslant 2k$, hence $v$ gives weight to $u$ by
    $R_1$. Note that $v$ does not give any weight to neighbors $w$
    with $D(w)\geqslant 2k$, hence $v$ distributes its weight among at
    most $\frac{(D(v)-2k)D(u)}{2k-D(u)}$ vertices. Thus $u$ receives at least
    \[\frac{(D(v)-2k)(2k-D(u))}{(D(v)-2k)D(u)}=\frac{2k}{D(u)}-1\]
  \item Assume that $k+1\leqslant D(v) < k+\frac{D(u)k}{2D(u)-2k}$.
    Then by Proposition~\ref{prop:C3}, $v$ has at least $k$ neighbors
    $w$ with $D(w)\geqslant \frac{k^2D(u)}{(D(u)-k)(D(v)-k)}$. Observe
    that in this case, $D(w)\geqslant 2k+1$ and $D(v) <2k$, hence $w$
    gives weight to $v$ by $R_1$. The transfered amount is at least
    \[\frac{D(w)-2k}{D(w)}=1-\frac{2k}{D(w)}\geqslant 1-\frac{2(D(u)-k)(D(v)-k)}{kD(u)}\]
    Thus, the weight of $v$ after $R_1$ is at least
    \[D(v)-2k+k\left(1-\frac{2(D(u)-k)(D(v)-k)}{kD(u)}\right)=(D(v)-k)\left(\frac{2k}{D(u)}-1\right)\]
    This is non-negative, hence either $u$ has non-negative weight
    after $R_1$, or it receives weight from $v$ by $R_2$. In this
    case, observe that $v$ has at least $k$ neighbors with
    non-negative charge, hence the transfered weight is at least
    \[\frac{D(v)-k}{D(v)-k}\left(\frac{2k}{D(u)}-1\right)=\frac{2k}{D(u)}-1\]
  \end{itemize}
  Therefore, $u$ ends up happy, and we obtain the required
  contradiction. This ends the proof of Theorem~\ref{thm:maincc}.
\end{itemize}

%%%%%%%%%%%%%%%%%%%%%%%%%%%%%%%%%%%%%%%%%%%%%%%%%%%%%%%%%%%%%%%
\section{Upper bound when $\mad<4$}
\label{sec:mad4upper}

In this section, we prove the following result.
\begin{theorem}
  \label{thm:main}
  Let $G$ be a graph with $\mad(G)<4$ and $\Delta \geqslant 8$. Then
  $\chi(G^2)\leqslant 3\Delta(G)+1$.
\end{theorem}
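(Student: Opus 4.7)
The plan is to prove the stronger statement that $G^2$ is $3\Delta$-degenerate, which immediately implies $(3\Delta+1)$-correspondence-colorability (and thus the claimed chromatic bound). I would work in the same framework as in Section~\ref{sec:ghost}: assume for contradiction that $G$ is a counterexample with $\mad(G)<4$, $\Delta(G)=\Delta\geqslant 8$, minimizing $|V(G)|+|E(G)|$, so that every proper subgraph $H$ of $G$ with $\mad(H)<4$ has $H^2$ that is $3\Delta(H)$-degenerate, hence $3\Delta$-degenerate.

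First, I would identify a list of reducible configurations, each describing a local structure around an edge $uv$ or a vertex $u$. The extraction strategy mirrors Propositions~\ref{prop:C1}--\ref{prop:C3}: delete $uv$, invoke minimality on $(G\setminus uv)^2$ to get a $3\Delta$-degenerate ordering $\sigma$, and reinsert $u$ and $v$ (and appropriate low-degree neighbors) near the end of $\sigma$. Checking that each reinserted vertex has at most $3\Delta$ earlier $G^2$-neighbors reduces to degree counts, now with the sharper target $3\Delta$ in place of $f(2,\Delta)$. Note that a $2^-$-vertex already satisfies $|N_{G^2}(u)|\leqslant 2\Delta$ and is immediately reducible. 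The substantive cases concern 3-vertices, whose $G^2$-degree can reach exactly $3\Delta$ (when all three neighbors have degree $\Delta$), so the reducible configurations must impose additional degree restrictions on the neighbors and second neighbors.

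Second, I would run a discharging argument, assigning $\omega(v)=d(v)-4$ to each vertex. By $\mad(G)<4$, $\sum_v\omega(v)<0$. Using the ghost vertex technique of Section~\ref{sec:ghost} (or classical discharging), I would redistribute charge from high-degree vertices (where $\omega>0$) to 3-vertices (where $\omega=-1$). The reducible configurations forbid the problematic structures, letting me show that every vertex ends with non-negative weight and thus reach a contradiction. The condition $\Delta\geqslant 8$ is used here to guarantee enough absorbing capacity in the high-degree vertices.

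The main obstacle is the tightness of the target $3\Delta$: the construction in Section~\ref{sec:mad4lower} shows that the square of a graph with $\mad<4$ can already require $\tfrac{5\Delta}{2}$ colors, so the proof has little slack. Consequently, the reducible configurations must be finely tuned to 3-vertices whose neighborhoods are close to saturating the $3\Delta$ bound, and the discharging rules must be matched to exactly these configurations. I expect the combinatorial enumeration of 3-vertex neighborhoods where the degeneracy argument nearly fails, and the design of matching discharging rules, to be the main source of technical difficulty.
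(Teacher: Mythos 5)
Your overall framework --- prove the stronger statement that $G^2$ is $3\Delta$-degenerate by taking an edge-minimal counterexample, exhibiting reducible configurations via a delete-an-edge-and-reinsert argument, and finishing with discharging from the initial charge $\omega(v)=d(v)-4$ --- is exactly the paper's. But the proposal stops at the framework, and the one concrete structural claim it makes points in the wrong direction, which is where the gap lies. You locate the difficulty at $3$-vertices ``whose $G^2$-degree can reach exactly $3\Delta$''. In fact every $3^-$-vertex has at most $3\Delta$ neighbours in $G^2$ (each neighbour $w$ contributes at most $d(w)$ vertices), so it can always be appended at the end of a good ordering; $3^-$-vertices are never the obstruction to reinsertion. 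Conversely, your claim that a $2^-$-vertex is ``immediately reducible'' does not hold: to reinsert a $2$-vertex $v$ you must delete an edge $vu$ and then also reinsert $u$, whose $G^2$-degree can be of order $\Delta^2$, so the reducibility of such a configuration hinges entirely on hypotheses about $u$, not about $v$.

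The actual critical structures are the $4^+$-vertices $v$ for which $d(v)-d_2(v)$ is small, i.e.\ whose neighbourhood is dominated by $2$-vertices. Such vertices are harmless for the degeneracy extraction (few non-$2$-neighbours means few preceding $G^2$-neighbours once their $2$-neighbours are themselves postponed to the end of the ordering), but they are charge-deficient in the discharging, since each $2$-neighbour must be paid $1$ out of the initial charge $d(v)-4$. The paper classifies $4^+$-vertices by $d-d_2$ into bad ($=3$), weakly bad ($=4$), weakly good ($=5$), good ($\geq 6$) and nice ($\geq 8$), proves that $d-d_2\geq 3$ always holds in a minimal counterexample (Proposition~\ref{prop:types}), and then establishes adjacency restrictions among the deficient types (Propositions~\ref{prop:small}--\ref{prop:weakgood}) that are matched by three transfer rules; the hypothesis $\Delta\geq 8$ enters through reinsertion counts such as $2\Delta+8\leq 3\Delta$. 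None of this classification, nor any concrete configuration or discharging rule, appears in your proposal, so as written it is a plan rather than a proof, and the plan's stated focus on $3$-vertices would not lead you to the configurations that are actually needed.
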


Observe that this improves Theorem~\ref{thm:cc} when
$8\leqslant\Delta \leqslant 21$. To prove Theorem~\ref{thm:main}, we
actually prove that, for every $\Delta\geqslant 8$, if $G$ is a graph
with $\mad(G)<4$ and $\Delta(G)\leqslant \Delta$, then $G^2$ is
$3 \Delta$-degenerate. This implies Theorem~\ref{thm:main}, as well as its
generalizations for list and correspondence coloring.

By contradiction, take a graph $G$ with $\mad(G)<4$ and
$\Delta(G) \geqslant \Delta$, and assume that $G^2$ is not
$3\Delta$-degenerate. Moreover, assume that $G$ has minimum number of edges
among all the graphs having this property. We say that an ordering of
the vertices of $G$ is \emph{good} if every vertex is appears after at
most $3\Delta$ of its neighbors in $G^2$.

We again use the discharging method. In Subsection~\ref{sub:configs},
we prove that $G$ does not contain some configurations. Then, in
Subsection~\ref{sub:weights}, we obtain a contradiction using some
weight transfer argument.

\subsection{Reducible configurations}
\label{sub:configs}

To introduce the configurations, we need some terminology.
\begin{definition}
  Let $v$ be a $d$-vertex of $G$, with $d_i$ neighbors of degree $i$
  ($i=2,3$). If $d\geqslant 4$, we say that:
  \begin{itemize}
  \item[$\bullet$] $v$ is \emph{nice} if $d-d_2\geqslant 8$.
  \item[$\bullet$] $v$ is \emph{good} if $d-d_2\geqslant 6$.
  \item[$\bullet$] $v$ is \emph{weakly good} if $d-d_2=5$.
  \item[$\bullet$] $v$ is \emph{weakly bad of type $1$} if $d-d_2=4$ and $d_3=0$,
    and \emph{weakly bad of type $2$} if $d-d_2=4$ and $d_3=1$.
  \item[$\bullet$] $v$ is \emph{bad} if $d-d_2=3$.
  \end{itemize}
\end{definition}

According to this definition, we may first prove the following
classification of the vertices of $G$.
\begin{proposition}
  \label{prop:types}
  Every $4^+$-vertex of $G$ is bad, weakly bad, weakly good or good.
\end{proposition}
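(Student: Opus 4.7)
\noindent\emph{Plan of proof.}
The proposition amounts to ruling out two complementary configurations in the minimum counterexample $G$:
\begin{itemize}
\item[(A)] a $4^+$-vertex $v$ with $d(v) - d_2(v) \leq 2$; or
\item[(B)] a $4^+$-vertex $v$ with $d(v) - d_2(v) = 4$ and $d_3(v) \geq 2$.
\end{itemize}
In each case I would reach a contradiction by exhibiting a good ordering of $G^2$ (witnessing its $3\Delta$-degeneracy). The strategy is to delete a suitable element of $G$---typically a $2$-neighbor $u$ of $v$, or an edge $uv$---to produce a graph $G'$ with $\mad(G')\leq\mad(G)<4$, $\Delta(G')\leq\Delta$, and $|E(G')|<|E(G)|$. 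Minimality then supplies a good ordering $\sigma'$ of $(G')^2$, which I would modify by reinserting the removed element and possibly repositioning $v$ and its non-$2$-neighbors so as to obtain a good ordering $\sigma$ of $G^2$.

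The quantitative heart of the argument is the bound
\[
  |N_{G^2}(v)| \;\leq\; \sum_{y \in N(v)} d(y) \;\leq\; 2\, d_2 + 3\, d_3 + (d - d_2 - d_3)\, \Delta.
\]
A $2$-vertex has $G^2$-degree at most $2\Delta$, so reinserted $2$-vertices may safely be placed anywhere. For $v$ itself, placing its $2$-neighbors after $v$ in $\sigma$ bounds $v$'s earlier $G^2$-count by $d_2+(d-d_2)\Delta$, which in case~(A) is at most $3\Delta$ since $d-d_2\leq 2$ and $d_2\leq\Delta$. In case~(B), additionally placing the $d_3$ $3$-neighbors of $v$ after $v$ yields an earlier count of at most $d_2+2\, d_3+(d-d_2-d_3)\Delta\leq 3\Delta$, using $d-d_2-d_3\leq 2$ and $\Delta\geq 8$.

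The main obstacle is the \emph{hidden $G^2$-edges} phenomenon: removing a $2$-vertex $u$ with other neighbor $w$ eliminates the length-two path $v-u-w$, so the $G^2$-edge $vw$ (present in $G^2$) may be absent from $(G')^2$. When extending $\sigma'$, each such hidden edge may contribute one extra earlier $G^2$-neighbor to its later endpoint. The number of hidden pairs inside $N(v)$ is small---at most $\binom{d-d_2}{2}\leq 1$ in case~(A) and $\binom{4}{2}=6$ in case~(B)---and can be absorbed by placing $v$ together with its non-$2$-neighbors contiguously at the tail of $\sigma'$ (just before the reinserted $u$'s), in an order that ensures each hidden edge contributes at most once. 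Verifying this careful placement, particularly in case~(B) where up to six hidden pairs coexist with the reinserted vertices and the repositioned $v$, will be the most technical part of the proof.
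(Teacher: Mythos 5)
Your proposal is correct in substance and follows essentially the same route as the paper: split into the cases $d-d_2\leq 2$ and $d-d_2=4$, $d_3\geq 2$, delete an edge from $v$ to a $2$- or $3$-neighbor, invoke minimality, and reinsert $v$ together with its small-degree neighbors at the end of the ordering with the same degree counts. The one place you make the argument look harder than it is, is the ``hidden $G^2$-edges'' discussion: in the paper's version one deletes an \emph{edge} $vw$, and every $G^2$-adjacency destroyed by that deletion is incident to $v$ or to $w$; since those vertices (and the other repositioned ones) are moved to the tail and their earlier-neighbor counts are computed directly in $G^2$ rather than in $(G\setminus vw)^2$, no vertex left in place can gain an earlier $G^2$-neighbor, and the ``most technical part'' you defer simply does not arise.
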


\begin{proof}
  Assume there is a $4^+$-vertex $v$ of $G$ which is not bad, weakly
  bad, weakly good nor good. This implies that either
  $d(v)-d_2(v)\leqslant 2$ or $d(v)-d_2(v)=4$ and $d_3(v)\geqslant 2$.

  In the first case, since $d(v)\geqslant 4$, $v$ has a $2$-neighbor
  $w$. By minimality, take $\sigma$ a good ordering for
  $(G\setminus vw)^2$. Let $\sigma'$ be the ordering obtained by
  removing $v$ and its $2$-neighbors from $\sigma$, and adding them
  (in this order) at the end of $\sigma$. We show that $\sigma'$ is a
  good ordering.

  Note that $v$ has at most $2\Delta+\Delta-2=3\Delta-2$ neighbors
  appearing before it in $\sigma'$. Its $2$-neighbors are preceded by
  at most $2\Delta$ neighbors in $\sigma'$. Thus $\sigma'$ is a good
  ordering for $G$.

  In the second case, let $w_1,w_2$ be two $3$-neighbors of $v$. By
  minimality, take a good ordering $\sigma$ of $(G\setminus
  vw_1)^2$. Let $\sigma'$ be obtained by removing $v,w_1,w_2$ and the
  $2$-neighbors of $v$ from $\sigma$ and adding them at the end of
  $\sigma$. Note that $v$ appears after $2\Delta+\Delta-4+4=3\Delta$
  of its neighbors. Similarly, $w_1,w_2$ appear after $2\Delta+4$ of
  their neighbors. Finally, the $2$-neighbors of $v$ have at most
  $2\Delta$ neighbors in $G^2$, hence previously in $\sigma'$. The
  ordering $\sigma'$ is then good for $G$, a contradiction.
\end{proof}

We may now introduce the reducible configurations we consider. We
roughly show that vertices with small $d-d_2$ are not close in $G$. We
study the neighborhood of the vertices of each type, beginning with
the $3^-$-vertices.
\begin{proposition}
  \label{prop:small}
  In $G$, no $3^-$-vertex is adjacent to a $3^-$-vertex.
\end{proposition}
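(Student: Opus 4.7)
The plan is to argue by contradiction using the minimality of $G$. Suppose $G$ contains an edge $uv$ with $d(u),d(v)\leqslant 3$. I would delete this edge, apply minimality to obtain a good ordering of $(G\setminus uv)^2$, and then re-insert $u$ and $v$ at the very end to produce a good ordering of $G^2$.

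First I would note that $G\setminus uv$ still has $\mad<4$ and maximum degree at most $\Delta$, so by the minimality assumption $(G\setminus uv)^2$ is $3\Delta$-degenerate; let $\sigma$ be a good ordering witnessing this. Form $\sigma'$ from $\sigma$ by removing $u$ and $v$ and appending them (in any order) at the end. I claim $\sigma'$ is good for $G^2$, which contradicts the choice of $G$.

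For $u$ and $v$ themselves the count is immediate: a vertex $w$ of degree at most $3$ has at most $d(w)+d(w)(\Delta-1)=d(w)\Delta\leqslant 3\Delta$ neighbors in $G^2$, so each of $u$ and $v$ has at most $3\Delta$ predecessors in $\sigma'$. The main point to check is that no other vertex $x$ acquires extra predecessors. Here I would observe that every $G^2$-neighbor $y$ of $x$ which is not already a $(G\setminus uv)^2$-neighbor of $x$ must be joined to $x$ by a path of length at most two in $G$ that uses the edge $uv$, and therefore must belong to $\{u,v\}$. Since $u$ and $v$ sit at the end of $\sigma'$, they are not predecessors of $x$; hence the set of $G^2$-predecessors of $x$ in $\sigma'$ is contained in the set of $(G\setminus uv)^2$-predecessors of $x$ in $\sigma$, and thus has size at most $3\Delta$.

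The only mildly subtle step is the last observation about which adjacencies of $G^2$ can fail to occur in $(G\setminus uv)^2$; the rest is direct degree counting. Once this is in hand, $\sigma'$ is good for $G^2$, contradicting the fact that $G^2$ is not $3\Delta$-degenerate, and the proposition follows.
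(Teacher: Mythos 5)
Your proof is correct and follows essentially the same route as the paper: delete the edge $uv$, invoke minimality to get a good ordering of $(G\setminus uv)^2$, and append $u$ and $v$ at the end, using the fact that a $3^-$-vertex has at most $3\Delta$ neighbors in $G^2$ (the paper uses the sharper count $2\Delta+2$, but both suffice). Your extra check that no vertex outside $\{u,v\}$ gains predecessors is a point the paper leaves implicit, and it is handled correctly.
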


\begin{proof}
  Let $u,v$ be adjacent $3^-$-vertices of $G$. By minimality, let
  $\sigma$ be a good ordering for $(G\setminus uv)^2$. Remove $u$ and
  $v$ from $\sigma$ and add them at the end of $\sigma$. In the
  obtained coloring $\sigma'$, both $u$ and $v$ are preceded by at
  most $2\Delta+2$ neighbors. Since $\Delta > 2$, $\sigma'$ is a good
  ordering for $G^2$, a contradiction.
\end{proof}

\begin{proposition}
  \label{prop:bad}
  In $G$, every $4^+$-neighbor from a bad vertex is not bad.
\end{proposition}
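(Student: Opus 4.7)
The plan is to mimic Proposition~\ref{prop:small}. Suppose for contradiction that $G$ contains two adjacent bad $4^+$-vertices $u$ and $v$. By minimality of $G$ applied to $G\setminus uv$ (which still has $\mad<4$ and $\Delta(G\setminus uv)\leqslant\Delta$), I would obtain a good ordering $\sigma$ of $(G\setminus uv)^2$, and then turn it into a good ordering $\sigma'$ of $G^2$ by removing a carefully chosen set $R$ of vertices from $\sigma$ and re-appending them at the end. Merely removing $u$ and $v$, as in Proposition~\ref{prop:small}, will not work, because their degrees can reach $\Delta$. Instead, I would take $R=\{u,v\}\cup N_2(u)\cup N_2(v)$, where $N_2(w)$ denotes the set of $2$-neighbors of $w$, and append the elements of $R$ at the end of $\sigma$ in the order $u$, $v$, then the $2$-neighbors (in any order); this specific order will matter for the bound on $v$.

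Showing that $\sigma'$ is good breaks into three parts. Every vertex in $V(G)\setminus R$ inherits its $3\Delta$-bound from $\sigma$: the only edges of $G^2$ missing from $(G\setminus uv)^2$ are incident to $u$ or $v$, both of which are now at the tail of $\sigma'$. For a $2$-neighbor $w$ of (say) $u$ with other endpoint $x$, a direct count gives $|N_{G^2}(w)|\leqslant d(u)+d(x)\leqslant 2\Delta$, comfortably within budget.

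The key step is bounding $|N_{G^2}(u)\setminus R|$. I would partition this set by the route taken from $u$: (i) at distance~$1$, the two non-$2$-neighbors of $u$ other than $v$ (since $u$ is bad, it has exactly three non-$2$-neighbors, one being $v$); (ii) at distance~$2$ via $v$, at most $2$ vertices, because the neighbors of $v$ other than $u$ are the $d(v)-3$ $2$-neighbors of $v$ (all in $R$) together with exactly two other non-$2$-neighbors; (iii) at distance~$2$ via a $2$-neighbor of $u$, whose other endpoint is forced to be a $4^+$-vertex by Proposition~\ref{prop:small} and hence lies outside $N_2(u)\cup N_2(v)$, contributing at most $d(u)-3$ further vertices; and (iv) at distance~$2$ via one of the two non-$2$-neighbors of $u$ other than $v$, contributing at most $2(\Delta-1)$ vertices. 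Summing yields $|N_{G^2}(u)\setminus R|\leqslant d(u)+2\Delta-1\leqslant 3\Delta-1$. The same argument gives $|N_{G^2}(v)\setminus R|\leqslant 3\Delta-1$; since $u$ precedes $v$ in the tail and $u\in N_{G^2}(v)$, the total number of $G^2$-predecessors of $v$ in $\sigma'$ is at most $3\Delta$. This contradicts $G$ being a counterexample.

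The main obstacle I expect is the design of $R$: leaving $N_2(v)$ out would blow up step~(ii) from a constant to $d(v)-1$, pushing the total above $3\Delta$ whenever $d(v)$ is close to $\Delta$. Throwing $N_2(v)$ in solves this, and Proposition~\ref{prop:small} is exactly what makes step~(iii) self-contained by forbidding further $2$-vertices along the chain; without it, the argument would require iterative removal.
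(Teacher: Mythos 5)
Your proof is correct and follows essentially the same strategy as the paper's: delete one edge, invoke minimality to get a good ordering, re-append the bad vertices together with the $2$-neighbors of both $u$ and $v$ at the tail, and count predecessors using $d-d_2=3$ and Proposition~\ref{prop:small}. The only cosmetic difference is that the paper deletes the edge $vw$ to a $2$-neighbor and re-appends only $v$ (plus the $2$-vertices), whereas you delete $uv$ and re-append both $u$ and $v$; both counts close with the same $3\Delta$ budget.
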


\begin{proof}
  Let $u,v$ be adjacent bad vertices of $G$. Let $w$ be a $2$-neighbor
  of $v$. By minimality, take a good ordering $\sigma$ of
  $(G\setminus vw)^2$. We remove $v$ and the $2$-neighbors of $u$ and
  $v$ from $\sigma$ and add them in this order at the end of
  $\sigma$. In the obtained coloring $\sigma'$, the vertex $v$ appears
  after at most $3\Delta$ of its neighbors. Moreover, each of the (at
  most) $2\Delta-6$ uncolored $2$-vertices has at most $\Delta+4$
  neighbors in $\sigma$, hence appears after at most $3\Delta-2$
  neighbors in $\sigma'$. Hence $\sigma'$ is a good ordering for
  $G^2$, a contradiction.
\end{proof}

\begin{proposition}
  \label{prop:weakbad}
  Let $v$ be a bad neighbor in $G$ from a weakly bad vertex $u$. Then
  $v$ has at least two nice neighbors.
\end{proposition}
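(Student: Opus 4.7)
The plan is to argue by contradiction in the same reducibility style as Propositions~\ref{prop:small} and~\ref{prop:bad}. Assume $v$ is a bad neighbor of the weakly bad vertex $u$ but $v$ has at most one nice neighbor. Since $v$ is bad, it has exactly three non-$2$-neighbors: $u$ and two others I call $u_1,u_2$; by hypothesis at least one of them, say $u_1$, is non-nice. Observe that this gives $d(u_1)-d_2(u_1)\leq 7$: for a $4^+$-vertex this is the negation of niceness, and if $u_1$ is a $3$-vertex then $d_2(u_1)=0$ by Proposition~\ref{prop:small}, hence $d(u_1)-d_2(u_1)=3$.

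I would then pick a $2$-neighbor $w$ of $v$ (which exists because $v$ is bad of degree at least $4$), invoke the minimality of $G$ to obtain a good ordering $\sigma$ of $(G\setminus vw)^2$, and form $\sigma'$ by removing $v$ together with every $2$-vertex lying in $N(v)\cup N(u)\cup N(u_1)\cup N(u_2)$, then appending these vertices at the end with $v$ first and the $2$-vertices in any order. The goal is to show that $\sigma'$ is a good ordering of $G^2$, giving the required contradiction.

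The crux is bounding the number of $G^2$-neighbors of $v$ preceding it in $\sigma'$. Let $S$ be the set of removed $2$-vertices, so $S\subseteq N_{G^2}(v)$. Starting from the crude bound
\[
|N_{G^2}(v)|\leq d(v)+\sum_{x\in N(v)}(d(x)-1)=2d_2(v)+d(u)+d(u_1)+d(u_2),
\]
subtracting $|S|$---where shared $2$-vertices between the four neighborhoods contribute equally to the over-count on both sides and so cancel---gives
\[
|N_{G^2}(v)\setminus S|\leq d_2(v)+(d(u)-d_2(u))+(d(u_1)-d_2(u_1))+(d(u_2)-d_2(u_2)).
\]
Using $d_2(v)\leq \Delta-3$, $d(u)-d_2(u)=4$ (from weakly bad), $d(u_1)-d_2(u_1)\leq 7$ (from non-nice), and the trivial $d(u_2)-d_2(u_2)\leq \Delta$, this is at most $2\Delta+8$, which is $\leq 3\Delta$ precisely when $\Delta\geq 8$.

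The remaining verifications are routine: each appended $2$-vertex has $G^2$-degree at most $2\Delta<3\Delta$, so its preceding-neighbor count is automatically fine; and for vertices kept in place in $\sigma$, the only new $G^2$-edges in $G^2\setminus(G\setminus vw)^2$ touch $v$ or $w$, both of which are appended at the end of $\sigma'$, so their preceding-neighbor counts are unchanged from $\sigma$. The main obstacle is arriving at the sharp count for $v$: without the hypothesis that $u_1$ is non-nice, the bound above becomes $3\Delta+1$ and the argument fails, so the non-niceness of $u_1$ is exactly the slack that the reducibility argument exploits.
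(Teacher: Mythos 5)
Your proof is correct and follows essentially the same reducibility argument as the paper: delete an edge from $v$ to a $2$-neighbor, form a new ordering by appending $v$ and the nearby $2$-vertices at the end, and bound the number of preceding $G^2$-neighbors of $v$ by $2\Delta+8\leq 3\Delta$ using the non-niceness of one neighbor and the weak badness of $u$. Your accounting is in fact slightly more explicit than the paper's, whose text only mentions removing the $2$-vertices incident to $v$ and the non-nice neighbor, even though its stated bound $2\Delta+1+d(w)-d_2(w)$ silently requires also appending the $2$-neighbors of $u$ and using $d(u)-d_2(u)=4$ --- a point you handle correctly.
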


\begin{proof}
  %Assume that $v$ has a neighbor different from $u$ which is not nice. 
  Assume that $v$ has a neighbor $w$ such that $w$ is not nice and $w \neq u$.
  Since $v$ is bad, it has a neighbor $x$ of degree $2$. By
  minimality, we take a good ordering $\sigma$ of $(G\setminus
  vx)^2$. We remove $v$ and the $2$-vertices incident to $v,w$ from
  $\sigma$ and add them in this order at the end of $\sigma$.

  In the obtained ordering $\sigma'$, the vertex $v$ has at most
  $2\Delta+1+d(w)-d_2(w)$ neighbors before it. Since $w$ is not nice,
  this is bounded by $2\Delta+8$ and by $3\Delta$ since
  $\Delta \geqslant 8$. Moreover, each $2$-vertex has at most
  $2\Delta$ neighbors, hence $\sigma'$ is a good ordering for $G'$, a
  contradiction.
\end{proof}

\begin{proposition}
  \label{prop:weakbad2}
  In $G$, each weakly bad vertex of type 2 has at least one good
  neighbor.
\end{proposition}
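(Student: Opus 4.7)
The plan is to argue by contradiction. Let $v$ be weakly bad of type~$2$, so $d(v)-d_2(v)=4$ and $d_3(v)=1$; denote by $w_3$ its unique $3$-neighbor and by $u_1,u_2,u_3$ its three $4^+$-neighbors. Assume that none of the $u_i$ is good; then $d(u_i)-d_2(u_i)\leq 5$, and in particular each $u_i$ has at most four non-$v$, non-$2$-vertex neighbors.

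The approach is to remove the edge $vw_3$ and apply the minimality of $G$ to $G\setminus vw_3$ (which still satisfies $\mad<4$ and has maximum degree at most $\Delta$, with one fewer edge) to obtain a good ordering $\sigma$ of $(G\setminus vw_3)^2$. I then form $\sigma'$ by extracting from $\sigma$ the vertices $v$, $w_3$, all $2$-neighbors of $v$, and all $2$-neighbors of $u_1,u_2,u_3$, and appending them to the end in the order $v$, $w_3$, then the removed $2$-vertices (in any order). The aim is to show that $\sigma'$ is a good ordering of $G^2$.

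The easy part of the verification covers three classes of vertices. First, every non-moved vertex is fine, because any $G^2$-edge absent from $(G\setminus vw_3)^2$ must be incident to $v$ or $w_3$, both of which now sit in the appendix, so the number of earlier $G^2$-neighbors of this vertex in $\sigma'$ does not exceed the bound already furnished by $\sigma$. Second, each moved $2$-vertex has $G^2$-degree at most $2\Delta\leq 3\Delta$. Third, $w_3$, being a $3$-vertex, has $G^2$-degree at most $\sum_{x\sim w_3} d(x)\leq 3\Delta$.

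The main step, and the only one requiring $\Delta\geq 8$, is the count for $v$, which sits first in the appendix, so only non-moved $G^2$-neighbors of $v$ contribute. Enumerating them: the three $u_i$'s; at most four vertices per $u_i$ via distance-$2$ paths through $u_i$, since $d(u_i)-1-d_2(u_i)\leq 4$; the two non-$v$ neighbors of $w_3$ (both $4^+$-vertices by Proposition~\ref{prop:small}); and the $d(v)-4$ other neighbors of the moved $2$-neighbors of $v$ (again $4^+$-vertices by Proposition~\ref{prop:small}). This totals at most $3+12+2+(d(v)-4)=13+d(v)\leq 13+\Delta\leq 3\Delta$ for $\Delta\geq 7$, contradicting the minimality of $G$. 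Hence some $u_i$ must be good.
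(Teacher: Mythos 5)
Your proof is correct and follows essentially the same route as the paper's: delete the edge from the weakly bad vertex to its $3$-neighbor, invoke minimality, move the vertex, its $3$-neighbor, and the $2$-neighbors of it and of its three non-good $4^+$-neighbors to the end of the ordering, and bound the count for the weakly bad vertex by $d(v)+13\leq \Delta+13\leq 3\Delta$. Your write-up is in fact slightly more explicit than the paper's (e.g.\ in checking the non-moved vertices and in invoking Proposition~\ref{prop:small} to justify that the relevant distance-two vertices are not among the moved $2$-vertices), but the argument is the same.
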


\begin{proof}
  Let $u$ be a weakly bad vertex of type $2$ without nice
  neighbor. Let $v_1,v_2,v_3$ be the neighbors of $u$ that are not
  good and let $w$ be the $3$-neighbor of $u$. By minimality, take a
  good ordering $\sigma$ of $(G\setminus uw)^2$. We define an ordering
  $\sigma'$ by removing $u,w$ and the $2$-vertices adjacent to
  $u,v_1,v_2,v_3$ from $\sigma$ and adding them in this order at the
  end of $\sigma$.

  The number of neighbors of $u$ preceding it in $\sigma'$ is at most
  $\Delta-2+d(v_1)-d_2(v_1)+d(v_2)-d_2(v_2)+d(v_3)-d_2(v_3)\leqslant
  \Delta+13$. Since $\Delta \geqslant 8$, this is bounded by $3\Delta$.
  
  The vertex $w$ has degree $3$, hence has at most $3\Delta$ neighbors
  in $G^2$. Finally, the remaining $2$-vertices have at most $2\Delta$
  neighbors. Therefore, $\sigma'$ is a good ordering for $G^2$, a
  contradiction.
\end{proof}

\begin{proposition}
  \label{prop:weakgood}
  In $G$, each weakly good vertex has at most three neighbors that are
  $3$-vertices or bad vertices with at most one nice neighbor.
\end{proposition}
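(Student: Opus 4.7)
The plan is to follow the reduction template used in the earlier propositions of this subsection. Assume for contradiction that a weakly good vertex $v$ has four neighbors $u_1,u_2,u_3,u_4$, each of which is a $3$-vertex or a bad vertex with at most one nice neighbor; let $z$ be the fifth non-$2$-neighbor of $v$, which exists since $d(v)-d_2(v)=5$. By minimality, for an edge $e$ incident to $v$ chosen below, the square $(G\setminus e)^2$ admits a good ordering $\sigma$, and I will construct a good ordering $\sigma'$ of $G^2$ by removing a set $S$ of vertices from $\sigma$ and re-appending them at the end, contradicting the assumed non-$3\Delta$-degeneracy of $G^2$.

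If $d(v)\geqslant 6$, I would pick a $2$-neighbor $x$ of $v$, take $e=vx$, and let $S=\{v\}\cup\{\text{2-vertices adjacent to any of }v,u_1,\dots,u_4\}$, re-appending $v$ and then the $2$-vertices. Every $G^2$-edge absent from $(G\setminus vx)^2$ has an endpoint in $\{v,x\}\subseteq S$, so the preceding counts of vertices remaining in $\sigma$ stay $\leqslant 3\Delta$. The count for $v$ consists of its $5$ non-$2$-neighbors as direct predecessors, at most $d_2(v)$ distance-$2$ vertices through its removed $2$-neighbors, at most $2$ through each $u_i$ (since being a $3$-vertex or bad forces $u_i$ to have at most two non-$v$ non-$2$-neighbors and its $2$-neighbors lie in $S$), and at most $\Delta-1$ through $z$, totaling $5+d_2(v)+8+(\Delta-1)\leqslant 2\Delta+7\leqslant 3\Delta$; the $2$-vertices are bounded by $2\Delta$ automatically.

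If $d(v)=5$, there is no $2$-neighbor of $v$, so I would take $e=vu_1$ with $u_1$ chosen to be a $3$-vertex among the $u_i$ whenever one exists, and let $S=\{v,u_1\}\cup\{\text{2-vertices adjacent to any of }v,u_1,\dots,u_4\}$. If $u_1$ is bad, then since $u_1$ has at most one nice neighbor while $v$ is not nice, one of $u_1$'s two non-$v$ non-$2$-neighbors (call it $y$) is not nice, and I also put the $2$-neighbors of $y$ into $S$. The crucial choice is to re-append $S$ in the order $u_1, v$, then $2$-vertices: putting $u_1$ \emph{before} $v$ keeps $v$ out of $u_1$'s preceding list. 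New $G^2$-edges all have both endpoints in $\{v,u_1\}\subseteq S$, so vertices outside $S$ are fine. When $u_1$ is a $3$-vertex, $|N_{G^2}(u_1)|\leqslant 3\Delta$ settles $u_1$; when $u_1$ is bad, the preceding count of $u_1$ totals $2+4+d_2(u_1)+6+(\Delta-1)=\Delta+11+d_2(u_1)\leqslant 2\Delta+8\leqslant 3\Delta$ precisely at $\Delta\geqslant 8$, using $d(y)-d_2(y)\leqslant 7$ for the $6$ and $d_2(u_1)=d(u_1)-3$. The count for $v$ is $5+2+6+(\Delta-1)=\Delta+12\leqslant 3\Delta$ either way. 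The main obstacle is this last bad-$u_1$ inequality, which is tight at $\Delta=8$ and requires both the ``at most one nice neighbor'' hypothesis (to replace a $\Delta-1$ by $6$) and the re-appending order $u_1, v$ (to shave the final unit); dropping either would give a bound of $2\Delta+9>3\Delta$ at $\Delta=8$.
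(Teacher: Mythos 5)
Your argument is correct and is essentially the paper's proof: the same minimality/edge-deletion reduction, the same trick of re-appending the bad neighbor \emph{before} the weakly good vertex so that it does not count among that neighbor's predecessors, and the same use of a non-nice neighbor $y$ (guaranteed by the ``at most one nice neighbor'' hypothesis, since $v$ itself is not nice) to cap the count at $2\Delta+8\leqslant 3\Delta$. The only cosmetic differences are that you keep $u_2,u_3,u_4$ in place and case-split on whether $v$ has a $2$-neighbor, whereas the paper uniformly removes all four neighbors together with $u$ and chooses the deleted edge according to the type of $v_1$; the resulting bounds ($2\Delta+7$ for the weakly good vertex, $2\Delta+8$ for a re-appended bad neighbor) coincide with the paper's.
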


\begin{proof}
  Let $u$ be a weakly good vertex of $G$ with at least four neighbors
  $v_1,\ldots,v_4$ that have degree $3$ or are bad vertices with at
  most one nice neighbor. 

  If $v_1$ has degree $3$, we take a good ordering $\sigma$ of
  $(G\setminus uv_1)^2$ by minimality. Otherwise, $v_1$ is a bad
  vertex so it has a $2$-neighbor $w$. In this case, we take $\sigma$
  as a good ordering of $(G\setminus v_1w)^2$.

  In both cases, we denote by $\sigma'$ the ordering obtained by
  removing $u, v_1,\ldots,v_4$ and their $2$-neighbors from $\sigma$.

  To construct a good ordering for $G^2$, we first consider the bad
  vertices among $v_1,\ldots,v_4$. Assume that $v_i$ is bad for some
  $i=1,\ldots,4$ and denote by $x$ one of its non-nice neighbors. We
  remove the $2$-neighbors of $x$ from $\sigma'$ and add $v_i$ at the
  end of $\sigma'$. Note that $v_i$ has at most
  $2\Delta+1+d(x)-d_2(x)\leqslant 2\Delta + 8$ appearing in $\sigma'$,
  which is less than $3\Delta$ since $\Delta \geqslant 8$.

  We then add $u$ at the end of $\sigma'$. It is still a good ordering
  since $u$ has at most $2\Delta+7\leqslant 3\Delta$ neighbors in
  $\sigma'$. We then add the remaining vertices $v_i$ (of degree $3$)
  to the end of $\sigma'$. Note that they have at most $2\Delta+5$
  neighbors in $\sigma'$.

  Finally, we add all the remaining $2$-vertices at the end of
  $\sigma'$. Then $\sigma'$ is a good coloring for $G^2$, a
  contradiction.
\end{proof}

%%%%%%%%%%%%%%%%%%%%%%%%%%%%%%%%%%%%%%%%%%%%%%%
\subsection{Discharging part}
\label{sub:weights}

We may now reach a contradiction. We give an initial weight
$\omega(v)=d(v)-4$ to each vertex $v$ of $G$. Since $\mad(G)< 4$, the
total weight is negative. 

Observe that the ghost method we use in Section~\ref{sec:ghost} seems
not to be useful there. Indeed, we could have used $2^-$-vertices as
ghosts. In this case, we should have designed discharging rules such
that the following assertions hold:
\begin{itemize}
\item[$\bullet$] If $v$ is a $3^+$-vertex, then $v$ ends up with non-negative weight.
\item[$\bullet$] If $v$ is a $2$-vertex, then $v$ ends up with weight at least $d(v)-4+d_{3^+}(v)$.
\end{itemize}
Since $2^-$-vertices are not adjacent by Proposition~\ref{prop:small},
the last constraint can be rewritten as: $2^-$-vertices have to end
with non-negative weight. Thus, we basically end up with what we
actually have to prove. We now introduce some discharging rules. 

We first apply the following rule: each vertex gives $1$ to its
neighbors of degree $2$ and $\frac{1}{3}$ to its neighbors of degree
$3$. Observe that nice vertices are all good. We may then state our
other rules:
\begin{enumerate}
\item Every nice vertex gives $\frac{1}{2}$ to its bad neighbors.
\item Every $4^+$-vertex which is not nice gives $\frac{1}{3}$ to each
  bad neighbor having at most one nice neighbor.
\item Every good vertex gives $\frac{1}{3}$ to its weakly bad
  neighbors of type 2.
\end{enumerate}

We now show that every vertex of $G$ ends up with non-negative weight,
which is a contradiction with the hypothesis $\mad(G)<4$. We separate
several cases according to the type of vertices we consider.
\medskip
\paragraph{\bf $3^-$-vertices.}

By the first rule, each $2$-vertex $v$ of $G$ receives $1$ from each
of its neighbors. Moreover, $v$ does not lose any weight, thus its
final weight is $\omega'(v)=2-4+2\* 1=0$.

Similarly, each $3$-vertex ends up with non-negative weight since it
does not lose weight and each of its neighbors gives it $\frac{1}{3}$
by the first rule. So $\omega'(v)=3-4+3\*\frac{1}{3}=0$.

\medskip
\paragraph{\bf Bad vertices.}

Let $v$ be a bad vertex of $G$. After applying the first rule, $v$ has
weight $-1$. Recall that bad vertices are not good, and no neighbor of
$v$ is bad by Proposition~\ref{prop:bad}, so $v$ does not lose some
additional weight.

Due to Rule 1, if $v$ has at least two nice neighbors, then $v$ ends
up with $\omega'(v)=-1+2\* \frac{1}{2}=0$. Otherwise, Rule 2 applies,
and $v$ receives $3\*\frac{1}{3}$ from its $4^+$-neighbors. Thus
$\omega'(v)\geqslant 0$.

\medskip
\paragraph{\bf Weakly bad vertices.}

Let $v$ be a weakly bad vertex of $G$.  Recall that $v$ is not
good. Moreover, if $v$ has a bad neighbor $w$, then
Proposition~\ref{prop:weakbad} ensures that $w$ has two nice
neighbors, so $v$ does not lose any weight during the second phase.

Thus, if $v$ has type $1$, then it ends up with no weight after the
first phase so its final weight is $\omega'(v)=0$.

Otherwise, $v$ has type $2$, so it has weight $-\frac{1}{3}$ after the
first phase. By Proposition~\ref{prop:weakbad2}, it has a good
neighbor, so it receives $\frac{1}{3}$ by Rule 3, and ends up with
weight $0$.

\medskip
\paragraph{\bf Weakly good vertices.}

Let $v$ be a weakly good vertex of $G$. After giving weight to
$2$-vertices, $v$ ends up with weight $1$. Note that $v$ is not good,
so $v$ only loses weight for each vertex of degree $3$ or to bad
neighbors with at most one nice neighbor. By
Proposition~\ref{prop:weakgood}, $v$ has at most three such neighbors,
so $v$ ends up with non-negative weight.

\medskip
\paragraph{\bf Good vertices.}

Let $v$ be a good vertex of $G$ of degree $d$ with $d_2$ neighbors of
degree $2$. If $v$ is not nice, it loses $\frac{1}{3}$ for at most
$d-d_2$ neighbors, hence its final weight is at most
$d-4-d_2-\frac{d-d_2}{3}=\frac{2}{3}(d-d_2)-4\geqslant 0$ since
$d-d_2\geqslant 6$.

Otherwise, $v$ loses $\frac{1}{2}$ for at most $d-d_2$ neighbors, so
its final weight is at most
$d-4-d_2-\frac{d-d_2}{2}=\frac{d-d_2}{2}-4\geqslant 0$ since
$d-d_2\geqslant 8$.

By Proposition~\ref{prop:types}, every vertex has been considered by
one of the previous arguments. Therefore, every vertex ends up with
non-negative weight, which concludes.

\section{Lower Bound}
\label{sec:mad4lower}

In this section, we investigate the lower bounds for $\chi(G^2)$ when
$G$ is a graph with $\mad(G)<4$. We first consider graphs with small
$\Delta$, here $\Delta\leqslant 5$.

\subsection{Small $\Delta$}
For $\Delta=1$, $G$ is a matching, hence $G^2$ is $2$-colorable, which
is tight when $G=P_2$. 

For $\Delta=2$, $G$ is a path or a cycle, hence $G^2$ is
$4$-degenerated and $5$-colorable. This is tight, as shown by
$C_5$. 

For $\Delta=3$, the Petersen graph needs $10$ colors since it has
diameter two. This achieves the upper bound $3\Delta+1$ for
$\Delta=3$.
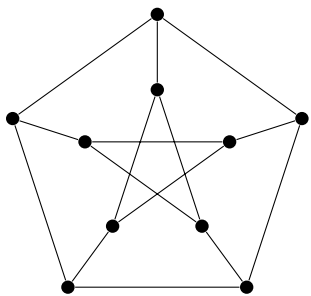
\begin{figure}[!ht]
\centering
\begin{tikzpicture}[v/.style={fill=black,minimum size =5pt,ellipse,inner sep=1pt},node distance=1.5cm]
\node[v] (v1) at (90:2){};
\node[v] (v2) at (162:2){};
\node[v] (v3) at (234:2){};
\node[v] (v4) at (306:2){};
\node[v] (v5) at (18:2){};

\node[v] (w1) at (90:1){};
\node[v] (w2) at (162:1){};
\node[v] (w3) at (234:1){};
\node[v] (w4) at (306:1){};
\node[v] (w5) at (18:1){};

\draw (v1) -- (v2) -- (v3) -- (v4) -- (v5) -- (v1);
\draw (w1) -- (w3) -- (w5) -- (w2) -- (w4) -- (w1);
\draw (v1) -- (w1);
\draw (v2) -- (w2);
\draw (v3) -- (w3);
\draw (v4) -- (w4);
\draw (v5) -- (w5);
\end{tikzpicture}
\caption{$\chi(G^2)=10$, $\mad<4$, $\Delta=3$}
\end{figure}

For $\Delta=4$, the following graph also has diameter two and
thus needs $13$ colors, also achieving the bound $3\Delta+1$. 

\begin{figure}[!ht]
\centering
\begin{tikzpicture}[v/.style={fill=black,minimum size =5pt,ellipse,inner sep=1pt},node distance=1.5cm]
\node[v] (v1) at (0:2){};
\node[v] (v2) at (45:2){};
\node[v] (v3) at (90:2){};
\node[v] (v4) at (135:2){};
\node[v] (v5) at (180:2){};
\node[v] (v6) at (225:2){};
\node[v] (v7) at (270:2){};
\node[v] (v8) at (315:2){};
\node[v] (v0) at (0,0){};

\draw (v1) to node[v,midway] (v14) {} (v4);
\draw (v2) to node[v,midway] (v27) {} (v7);
\draw (v3) to node[v,midway] (v36) {} (v6);
\draw (v5) to node[v,midway] (v58) {} (v8);

\draw (v27) --(v0) -- (v14);
\draw (v36) -- (v0) -- (v58);
\draw (v1) -- (v2) -- (v3) -- (v4) -- (v5) -- (v6) -- (v7) -- (v8) -- (v1);
\draw (v1) to (v6);
\draw (v2) to (v5);
\draw (v3) to (v8);
\draw (v4) to (v7);
\end{tikzpicture}
\caption{$\chi(G^2)=13$, $\mad<4$, $\Delta=4$}
\end{figure}
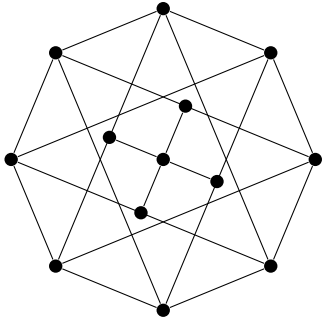

Finally, for $\Delta=5$, the following graph needs $15$ colors (the
black and red vertices induce a clique in the square). This graph is
build from a Petersen graph adding five vertices of degree 3 linked by
paths of length 2. Note that this graph has mad 4. However, removing
the red part leads to a graph of mad less than $4$ that needs $14$
colors.

\begin{figure}[!ht]
\centering
\begin{tikzpicture}[v/.style={fill=black,minimum size =5pt,ellipse,inner sep=1pt},node distance=1.5cm]
\node[v] (v1) at (90:1){};
\node[v] (v2) at (162:1){};
\node[v] (v3) at (234:1){};
\node[v] (v4) at (306:1){};
\node[v] (v5) at (18:1){};

\node[v] (w1) at (90:2){};
\node[v] (w2) at (162:2){};
\node[v] (w3) at (234:2){};
\node[v] (w4) at (306:2){};
\node[v] (w5) at (18:2){};

\node[v,red] (x1) at (54:1.5){};
\node[v] (x2) at (126:1.5){};
\node[v] (x3) at (198:1.5){};
\node[v] (x4) at (270:1.5){};
\node[v] (x5) at (342:1.5){};

\draw (v1) -- (v2) -- (v3) -- (v4) -- (v5) -- (v1);
\draw[bend right=90] (w1) to (w3) to (w5) to (w2) to (w4) to (w1);
\draw[bend right=10] (x4) to (v1);
\draw[bend right=10] (x5) to (v2);
\draw[bend right=10,red] (x1) to (v3);
\draw[bend right=10] (x2) to (v4);
\draw[bend right=10] (x3) to (v5);

\draw (v1) -- (w1);
\draw (v2) -- (w2);
\draw (v3) -- (w3);
\draw (v4) -- (w4);
\draw (v5) -- (w5);

\draw  (w1) -- (x2) -- (w2) -- (x3) -- (w3) -- (x4) -- (w4) -- (x5) -- (w5);
\draw[red] (w5) -- (x1) -- (w1);
\draw[gray] (x3) -- node[v,midway,gray] {}(x5) -- node[v,midway,gray]{}(x2) -- node[v,midway,gray]{}(x4);
\draw[red] (x3) -- node[v,midway,gray] {} (x1) -- node[v,midway,gray] {} (x4);
\end{tikzpicture}
\caption{$\chi(G^2)=14$, $\mad<4$, $\Delta=5$}
\end{figure}
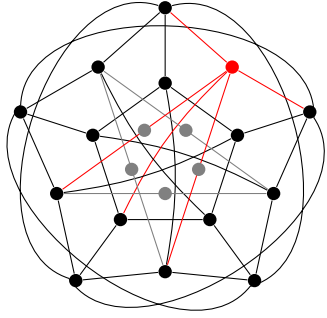

\subsection{Large $\Delta$}

We now give a construction improving the result of~\cite{KP} when
$\mad(G)<4$, even when $G$ is $2$-degenerate. We actually prove the
following result.

\begin{theorem}
  \label{thm:lower}
  There exists a family of $2$-degenerate graphs $G$ with $\mad(G)<4$,
  arbitrarily large maximum degree, and
  $\chi(G^2)\geqslant \frac{5\Delta(G)}{2}$.
\end{theorem}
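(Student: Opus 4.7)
The plan is to construct, for each integer $\Delta \geq 8$ divisible by $4$, a $2$-degenerate graph $G_\Delta$ of maximum degree $\Delta$ containing an independent set $T$ of size $\frac{5\Delta}{2}$ such that every two vertices of $T$ lie at distance at most $2$ in $G_\Delta$. Such a $T$ is a clique in $G_\Delta^2$, so $\chi(G_\Delta^2) \geq \frac{5\Delta}{2}$. Moreover, $2$-degeneracy automatically yields $\mad(G_\Delta) < 4$, since any subgraph $H$ of a $2$-degenerate graph satisfies $|E(H)| \leq 2|V(H)| - 2$ by summing along the elimination order.

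The construction generalizes the $\Delta = 5$ example displayed above, where a Petersen core is augmented by five outer vertices of degree $3$ linked by $2$-paths. In the general scheme we use exactly $5$ \emph{hub} vertices $h_1, \dots, h_5$ and a collection of degree-$2$ \emph{connector} vertices. The terminal set $T$ is partitioned into the $\binom{5}{2} = 10$ classes $T_{ij}$, one for each unordered pair $\{i,j\} \subseteq \{1,\dots,5\}$, with $|T_{ij}| = \Delta/4$. Every vertex $t \in T_{ij}$ is made adjacent to both $h_i$ and $h_j$. For each pair of terminals $(t, t')$ with $t \in T_{ij}$, $t' \in T_{kl}$, and $\{i,j\} \cap \{k,l\} = \emptyset$, we add a connector vertex adjacent to both $t$ and $t'$. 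If $\Delta > 8$ we add further connectors so that each terminal has degree exactly $\Delta$.

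Three routine checks then complete the proof. (i) Every pair $(t, t') \in T^2$ is at distance at most $2$ in $G_\Delta$: writing $H(t) = \{i,j\}$ for the hub indices of $t \in T_{ij}$, if $H(t) \cap H(t') \neq \emptyset$ then $t$ and $t'$ share a hub, otherwise they share a connector. (ii) Every vertex has degree at most $\Delta$: each terminal has $2 + (\Delta - 2) = \Delta$ neighbors, each hub $h_i$ has degree $\sum_{j \neq i} |T_{ij}| = 4 \cdot \frac{\Delta}{4} = \Delta$, and each connector has degree $2$. (iii) $G_\Delta$ is $2$-degenerate: remove all connectors first (each of degree $2$), then each terminal in any order (each of remaining degree exactly $2$ from its two hub neighbors), and finally the hubs, now of degree $0$.

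The main obstacle is the arithmetic balance of parameters. Each terminal in $T_{ij}$ has exactly $3 \cdot \frac{\Delta}{4}$ disjoint-hub partners, and all of them must fit within its $\Delta - 2$ connector neighbors, which forces $\Delta \geq 8$. For $\Delta$ divisible by $4$ the partition of $T$ into $10$ classes of size $\Delta/4$ is integral, and the extra connectors needed when $\Delta > 8$ form a $(\frac{\Delta}{4} - 2)$-regular graph on $T$, which exists by standard regular-graph constructions. Verifying that the resulting graph realizes both the clique of size $\frac{5\Delta}{2}$ in its square and the valid $2$-degeneracy ordering is then straightforward, and the infinite family $\{G_\Delta : \Delta \in 4\mathbb{N},\; \Delta \geq 8\}$ furnishes the required arbitrarily large maximum degree.
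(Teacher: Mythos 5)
Your construction is the paper's construction in different notation: your five hubs are the vertices of $K_5$, your terminal classes $T_{ij}$ are the sets $V_e$ obtained by replacing each edge $e$ of $K_5$ with a copy of $K_{2,t}$ (with $t=\Delta/4$), and your connectors are the midpoints of the length-$2$ paths joining $V_e$ to $V_f$ for non-incident edges $e,f$; the clique of size $10t=\tfrac{5\Delta}{2}$ in the square and the $2$-degeneracy ordering are argued the same way. The only difference is the harmless and unnecessary padding of terminals with extra connectors to reach degree exactly $\Delta$, whereas the paper simply leaves them at degree $3t+2<4t=\Delta$.
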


Let $t$ be an integer. We define $G_t$ as the graph obtained from
$K_5$ by applying successively the two following operations:
\begin{itemize}
\item[$\bullet$] Replacing each edge $e$ by a copy of $K_{2,t}$ by identifying
  the endpoints of the edge with the two vertices in the same
  partition. We denote by $V_e$ the $t$ vertices added while replacing
  $e$.
\item[$\bullet$] For each pair of non-incident edges $e,f$, we add a path over
  two edges between each pair of vertices in $V_e\* V_f$.
\end{itemize}

For $t>2$, observe that $\Delta(G_t)=4t$ and $G_t$ is $2$-degenerated
(consider the vertices by reversing their order of creation). Thus
$\mad(G_t)<4$.

Moreover, the vertices in $\cup_{e\in E(K_5)} V_e$ induce a clique of
size $10t$ in $G_t^2$. Therefore, we have
$\chi(G_t^2)\geqslant 10t=\frac{5\Delta(G_t)}{2}$.

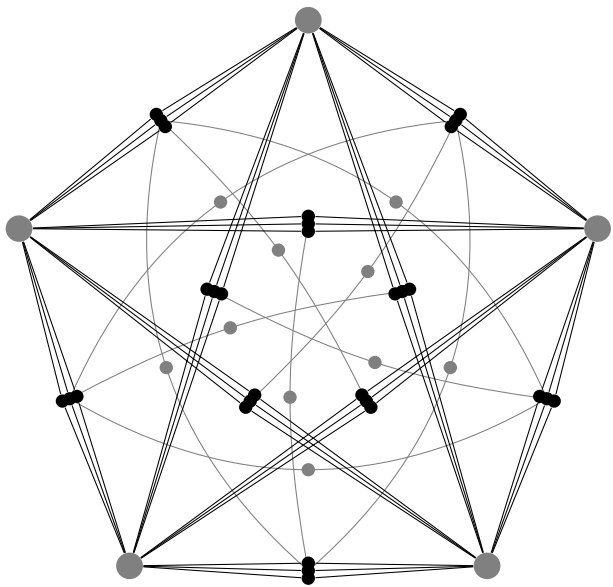
\begin{figure}[!ht]
\centering
\begin{tikzpicture}[v/.style={fill=gray,minimum size =10pt,ellipse,inner sep=1pt},node distance=1.5cm,v2/.style={fill=black,minimum size =5pt,ellipse,inner sep=1pt},node distance=1.5cm]
\node[v] (v1) at (90:4){};
\node[v] (v2) at (162:4){};
\node[v] (v3) at (234:4){};
\node[v] (v4) at (306:4){};
\node[v] (v5) at (18:4){};
\node[v2] (vv11) at (126:3.3){};
\node[v2] (vv21) at (198:3.3){};
\node[v2] (vv31) at (270:3.3){};
\node[v2] (vv41) at (342:3.3){};
\node[v2] (vv51) at (54:3.3){};

\draw[bend right,gray] (vv11) to node[v2,midway,gray] {} (vv31);
\draw[bend left,gray] (vv11) to node[v2,midway,gray] {} (vv41);
\draw[bend left,gray] (vv21) to node[v2,midway,gray] {} (vv51);
\draw[bend right,gray] (vv21) to node[v2,midway,gray] {} (vv41);
\draw[bend right,gray] (vv31) to node[v2,midway,gray] {} (vv51);

\node[v2] (vv12) at (126:3.4){};
\node[v2] (vv22) at (198:3.4){};
\node[v2] (vv32) at (270:3.4){};
\node[v2] (vv42) at (342:3.4){};
\node[v2] (vv52) at (54:3.4){};
% \node[rotate=36] at (126:3.35){$\vdots$};
% \node[rotate=108] at (198:3.35){$\vdots$};
% \node[rotate=180] at (270:3.35){$\vdots$};
% \node[rotate=252] at (342:3.35){$\vdots$};
% \node[rotate=324] at (54:3.35){$\vdots$};

\node[v2] (vv13b) at (162:1.3){};
\node[v2] (vv35b) at (306:1.3){};
\node[v2] (vv52b) at (90:1.3){};
\node[v2] (vv24b) at (234:1.3){};
\node[v2] (vv41b) at (18:1.3){};

\draw[bend left=10,gray] (vv11) to node[v2,midway,gray] {} (vv35b);
\draw[bend left=10,gray] (vv21) to node[v2,midway,gray] {} (vv41b);
\draw[bend left=10,gray] (vv31) to node[v2,midway,gray] {} (vv52b);
\draw[bend left=10,gray] (vv41) to node[v2,midway,gray] {} (vv13b);
\draw[bend left=10,gray] (vv51) to node[v2,midway,gray] {} (vv24b);

\node[v2] (vv1) at (126:3.2){};
\node[v2] (vv2) at (198:3.2){};
\node[v2] (vv3) at (270:3.2){};
\node[v2] (vv4) at (342:3.2){};
\node[v2] (vv5) at (54:3.2){};

\node[v2] (vv13c) at (162:1.4){};
\node[v2] (vv35c) at (306:1.4){};
\node[v2] (vv52c) at (90:1.4){};
\node[v2] (vv24c) at (234:1.4){};
\node[v2] (vv41c) at (18:1.4){};

\node[v2] (vv13a) at (162:1.2){};
\node[v2] (vv35a) at (306:1.2){};
\node[v2] (vv52a) at (90:1.2){};
\node[v2] (vv24a) at (234:1.2){};
\node[v2] (vv41a) at (18:1.2){};

\draw (v1) -- (vv1) -- (v2) -- (vv2) -- (v3) -- (vv3) -- (v4) -- (vv4) -- (v5) -- (vv5) -- (v1);
\draw (v1) -- (vv11) -- (v2) -- (vv21) -- (v3) -- (vv31) -- (v4) -- (vv41) -- (v5) -- (vv51) -- (v1);
\draw (v1) -- (vv12) -- (v2) -- (vv22) -- (v3) -- (vv32) -- (v4) -- (vv42) -- (v5) -- (vv52) -- (v1);
\draw (v1) -- (vv13a) -- (v3) -- (vv35a) -- (v5) -- (vv52a) -- (v2) -- (vv24a) -- (v4) -- (vv41a) -- (v1);
\draw (v1) -- (vv13b) -- (v3) -- (vv35b) -- (v5) -- (vv52b) -- (v2) -- (vv24b) -- (v4) -- (vv41b) -- (v1);
\draw (v1) -- (vv13c) -- (v3) -- (vv35c) -- (v5) -- (vv52c) -- (v2) -- (vv24c) -- (v4) -- (vv41c) -- (v1);

% \draw[bend right=5,gray] (vv1) to node[v2,midway,gray] {}(vv3);
% \draw[bend right=5,gray] (vv2) to node[v2,midway,gray] {}(vv4);
% \draw[bend right=5,gray] (vv3) to node[v2,midway,gray] {}(vv5);
% \draw[bend right=5,gray] (vv4) to node[v2,midway,gray] {}(vv1);
% \draw[bend right=5,gray] (vv5) to node[v2,midway,gray] {}(vv2);

% \draw[bend right=5,gray] (vv11) to node[v2,midway,gray] {}(vv3);
% \draw[bend right=5,gray] (vv21) to node[v2,midway,gray] {}(vv4);
% \draw[bend right=5,gray] (vv31) to node[v2,midway,gray] {}(vv5);
% \draw[bend right=5,gray] (vv41) to node[v2,midway,gray] {}(vv1);
% \draw[bend right=5,gray] (vv51) to node[v2,midway,gray] {}(vv2);

% \draw[bend right=5,gray] (vv11) to node[v2,midway,gray] {}(vv31);
% \draw[bend right=5,gray] (vv21) to node[v2,midway,gray] {}(vv41);
% \draw[bend right=5,gray] (vv31) to node[v2,midway,gray] {}(vv51);
% \draw[bend right=5,gray] (vv41) to node[v2,midway,gray] {}(vv11);
% \draw[bend right=5,gray] (vv51) to node[v2,midway,gray] {}(vv21);

% \draw[bend right=5,gray] (vv1) to node[v2,midway,gray] {}(vv31);
% \draw[bend right=5,gray] (vv2) to node[v2,midway,gray] {}(vv41);
% \draw[bend right=5,gray] (vv3) to node[v2,midway,gray] {}(vv51);
% \draw[bend right=5,gray] (vv4) to node[v2,midway,gray] {}(vv11);
% \draw[bend right=5,gray] (vv5) to node[v2,midway,gray] {}(vv21);

\end{tikzpicture}
\caption{The graph $G_t$, black vertices induce a clique in $G_t^2$}
\end{figure}

Observe that a similar construction can be done starting from any
cliques $K_n$. For $n=6$, this gives the same lower bound. However,
when $n\geqslant 7$, the clique number of $G_t^2$ is
$\frac{tn(n-1)}{2}$ while $\Delta(G_t)=t(\frac{n(n-1)}{2}- 2n+3)$, which
gives a worse lower bound.

\section{Conclusion}

In this paper we investigate lower and upper bounds for square
coloring of graphs with maximum average degree bounded, especially
with $\mad < 4$. Reducing the gap between the lower bounds and the
upper bounds in (\ref{bounds-new}) is an interesting problem.  So we
have the following question.
%One can observe that our upper bounds are best possible for small values of $\Delta$ ($\Delta \le 4$) and the lower bounds for large $\Delta$ are given by $2$-degenerated graphs.

\begin{question}
  Is there integer $D$ such that every graph $G$ with
  $\Delta(G) \geq D$ and $\mad(G) < 4$ has
  $\chi(G^2) \leq \frac{5\Delta(G)}{2}$?
\end{question}

Note that the constructions in Theorem \ref{thm:lower} are actually
2-degenerate. So we propose the following question.

\begin{question}
  Is there integer $D$ such that every graph $G$ with
  $\Delta(G) \geq D$ has $\chi(G^2) \leq \frac{5 \Delta(G)}{2}$ if $G$
  is 2-degenerate?
\end{question}

Moreover, while this lower bound cannot be strengthened using larger
cliques, there may be a way of generalizing the given
construction. Indeed, instead of considering a clique and replacing
edges by a bipartite graph $K_{2,p}$, consider an hypergraph on $kr$
vertices where all the hyperedges of size $k$ are present, and replace
each hyperedge by a bipartite graph $K_{k,p}$ (the construction for
Theorem~\ref{thm:lower} is the case $k=2$). Denote by $V_e$ the
vertices added while applying this construction to the hyperedge $e$
and by $G$ the obtained graph. The problem is then to add paths of
length 2 between $V_e$ and $V_f$ for every pair $(e,f)$ of non
incident hyperedges. Given a set of $k$ pairwise non-incident edges
$\{e_1,\ldots,e_k\}$, we can add $p^2$ vertices of degree $k$ to $G$
such that $V_{e_1}\cup\cdots \cup V_{e_k}$ induces a clique in
$G^2$. However, if this is done for every set of $k$ pairwise
non-incident edges, the degree of vertices in each $V_e$ is too large
to obtain a good bound.

Thus, we need to find a suitable packing of the hyperedges of the
considered hypergraph. In other terms, we have to solve the following
problem:

\begin{question}
  Given an integer $k$, is there an integer $r$ and set $\mathcal{S}$
  such that the following holds?
  \begin{enumerate}
  \item Each element of $\mathcal{S}$ is a set of $k$ pairwise
    disjoint $k$-subsets of $\llbracket 1,rk \rrbracket$.
  \item If $S,T$ are two $k$-subsets of $\llbracket 1,rk\rrbracket$,
    there exists an element of $\mathcal{S}$ containing both $S$ and
    $T$.
  \item If $S$ is a $k$-subset of $\llbracket 1,rk\rrbracket$, $S$ is
    contained in at most $ \frac{1}{k-1}\choose{k(r-1)}{k}$ elements
    of $\mathcal{S}$.
  \end{enumerate}
\end{question}

Solving this problem with $r=k$ would yield a bound of the same order
than in~\cite{KP}. However, we believe that the parameter $r$ can be
optimized (as done in Section~\ref{sec:mad4lower}, with $k=2$ and
$r=3$) to obtain much better values. Note that for our purposes, the
bound of Item 3 can be weakened up to an additive constant, or even to
$\frac{1}{k-1}\choose{k(r-1)}{k}(1+o_r(1))$ (with possibly some
consequences on the resulting lower bound).

%our lower bounds are given by 2-deg graphs -> questions about 2-degenerated.

\end{document}